\newtheorem{thm}{Theorem}[section]
\newtheorem{cor}[thm]{Corollary}
\newtheorem{lem}[thm]{Lemma}
\newtheorem{prop}[thm]{Proposition}
\newcommand{\mc}{\mathcal}
\newcommand{\DO}{\mathcal{D}_{\partial \Omega}}
\newcommand{\KO}{\mathcal{K}_{\partial \Omega}}
\renewcommand{\d}{\displaystyle}
\newcommand{\bef}{\begin{figure}}
	\newcommand{\enf}{\end{figure}}
\newcommand{\ds}{\displaystyle}
\newcommand{\pf}{\noindent {\sl Proof}. \ }
\newcommand{\p}{\partial}
\newcommand{\eqnref}[1]{(\ref {#1})}
\newcommand{\Cbb}{\mathbb{C}}
\newcommand{\Rbb}{\mathbb{R}}
\newcommand{\la}{\langle}
\newcommand{\ra}{\rangle}
\newcommand{\Kcal}{\mathcal{K}}
\newcommand{\Dcal}{\mathcal{D}}
\newcommand{\Pcal}{\mathcal{P}}
\newcommand{\Scal}{\mathcal{S}}
\def\Be{{\bf e}}
\newcommand{\Ga}{\alpha}
\newcommand{\Gb}{\beta}
\newcommand{\Gvf}{\varphi}
\newcommand{\Gg}{\gamma}
\newcommand{\Gl}{\lambda}
\newcommand{\Gt}{\theta}
\newcommand{\Gs}{\sigma}
\newcommand{\Go}{\omega}
\newcommand{\Gz}{\zeta}
\newcommand{\GD}{\Delta}
\newcommand{\GG}{\Gamma}
\newcommand{\GO}{\Omega}
\newcommand{\beq}{\begin{equation}}
\newcommand{\eeq}{\end{equation}}
\def\ol{\overline}
\numberwithin{equation}{section}
\numberwithin{figure}{section}
\author{Hyeonbae Kang\thanks{Department of Mathematics and Institute of Applied Mathematics, Inha University, Incheon
22212, S. Korea (hbkang@inha.ac.kr, xiaofeilee@hotmail.com).} \and
 Xiaofei Li\footnotemark[2] }
\begin{document}
\title{Construction of weakly neutral inclusions of general shape by imperfect interfaces\thanks{\footnotesize This work was
supported by NRF grants No. 2016R1A2B4011304 and 2017R1A4A1014735.}}
	
\maketitle

\begin{abstract}
Upon insertion of an inclusion into a medium with the uniform field, if the field is not perturbed at all outside the inclusion, then it is called a neutral inclusion. It is called a weakly neutral inclusion if the field is perturbed weakly. The inclusions neutral to multiple uniform fields are of circular shape if the medium is isotropic, and any other shape cannot be neutral. We consider in this paper the problem of constructing inclusions of general shape which are weakly neutral to multiple fields in two dimensions. We show that a simply connected domain satisfying a certain geometric condition can be realized as a weakly neutral inclusion to multiple fields by introducing an imperfect interface parameter on the boundary. The geometric condition on the domain and the imperfect interface parameter are determined by the first coefficient of the conformal mapping from the exterior of the unit disk onto the exterior of the domain. We provide some numerical examples to compare field perturbations by weakly neutral inclusions and perfectly bonding interfaces. They clearly show that the field perturbation by weakly neutral inclusions is much weaker.
\end{abstract}
	
\noindent {\footnotesize {\bf AMS subject classifications.} 35J47, 35R30}

\noindent {\footnotesize {\bf Key words.} Field perturbation, neutral inclusion, weakly neutral inclusion, polarization tensor vanishing structure, imperfect bonding, effective property, conformal mapping}

\section{Introduction}

If an inclusion with a different material property is inserted into a medium with a uniform field, then the uniform field is perturbed in general. However, there are some inclusions which do not perturb the uniform field outside the inclusion, and such an inclusion is referred to as a neutral inclusion.

Neutrality of inclusions has a significant implication in the theory of composites. It is shown by Hashin \cite{H1} and Hashin-Shtrikman \cite{H2} that since insertion of neutral inclusions does not perturb the outside uniform field, the effective conductivity of the assemblage filled with such inclusions of many different scales is the same as the conductivity of the matrix without inclusions. It is also proved that such an effective conductivity is one of the Hashin-Shtrikman bounds on the effective conductivity of arbitrary two-phase composites. We refer to \cite{mbook} for developments on neutral inclusions in relation to the theory of composites.

Hashin's inclusions consists of the core and the shell with interfaces of perfect bonding. However, there is another way to make neutral inclusions: a single inclusion with an imperfect interface. The perfect bonding interface is characterized by the continuity of both flux and potential along the interface, while the imperfect interface is characterized by either discontinuity of the potential or discontinuity of the flux along the interface. The former one is referred to as the low conductivity (LC) type, while the latter as the high conductivity (HC) type \cite{B.M}.

Consider the following LC type imperfect interface problem for $d=2,3$:
\beq\label{LC}
\begin{cases}
	\nabla\cdot\Gs\nabla u=0 \quad &\mbox{in } \GO \cup (\Rbb^d\setminus\ol{\GO}) ,\\
	\Gb(u|_+ - u|_-)= 	\ds \Gs_m \frac{\p u}{\p\nu}\Big|_+ \quad&\mbox{on } \p \GO,\\
    \vspace{0.3em}
    \ds \Gs_c \frac{\p u}{\p\nu}\Big|_- = \Gs_m \frac{\p u}{\p\nu}\Big|_+ \quad&\mbox{on } \p \GO,\\
	\vspace{0.3em}
	u(x)-a \cdot x =O(|x|^{1-d}) \quad&\mbox{as }|x|\rightarrow\infty,
\end{cases}
\eeq
where $a$ is a unit vector representing the background uniform field, the subscripts $+$ and $-$ indicate the limits from outside and inside $\GO$ to $\p \GO$, respectively, $\Gb$ is the interface parameter of LC type, which is a non-negative function defined on the interface $\p \GO$, and $\Gs$ is a piecewise constant function defined by
\begin{equation*}
\Gs=\Gs_c\chi(\GO)+\Gs_m\chi(\Rbb^d\setminus \ol{\GO}).
\end{equation*}
Here, $\chi$ denotes the characteristic function. The conductivity $\Gs_c$ is assumed to be isotropic (scalar) and constant, but $\Gs_m$ is allowed to be anisotropic, i.e., a positive definite symmetric constant matrix.

It is proved in \cite{T.R} (see also \cite{B,L,L.V2,L.V}) that if $\GO$ is a disk (or a ball) of radius $r$ and
\beq\label{Gbdisk}
\Gb=\frac{1}{r}\frac{\Gs_c\Gs_m}{\Gs_c-\Gs_m},
\eeq
then the solution $u$ to \eqnref{LC} satisfies
\beq\label{neutral2}
u(x)-a \cdot x \equiv 0 \quad\mbox{for all } x \in \Rbb^d\setminus \GO,
\eeq
in other words, the uniform field $-\nabla(a\cdot x)$ is unperturbed outside $\GO$. We also refer to \cite{B.M,M.B,Ru} for neutral inclusions of general shape with respect to a single uniform field by imperfect interfaces.

The neutral inclusion presented above is of circular (spherical) shape. Actually this is the only shape for the neutral inclusions (to multiple uniform fields). In fact, we prove in this paper (Theorem \ref{thm:ellipsoid} in Appendix) that the only neutral inclusions are disks (balls) with constant interface parameters if $\Gs_m$ is isotropic, and they are ellipses (ellipsoids) for the anisotropic case.

In this paper we consider weakly neutral inclusions of general shape.
The neutrality requires $u(x)-a \cdot x \equiv 0$ outside $\GO$, while the condition at $\infty$ in \eqnref{LC} requires $u(x)-a \cdot x =O(|x|^{1-d})$ as $|x| \to \infty$. In other words, the perturbation is of order $|x|^{1-d}$ for general inclusions, but it is completely vanishing for neural inclusions. The weakly neutral inclusions in between them, namely, they are inclusions such that the solution $u$ to  \eqnref{LC} satisfies
\beq\label{weakneucond}
u(x)-a \cdot x =O(|x|^{-d}), \quad\mbox{as } |x| \to \infty.
\eeq

The leading order term in the far-field expansion of the solution to \eqnref{LC} is expressed in terms of the polarization tensor (PT) associate with the inclusion $\GO$ and the interface parameter $\Gb$ (see \eqnref{faf-field}). Thus the weakly neutral inclusion is in fact the PT-vanishing structure. The notion of the PT-vanishing structure, or more generally, that of higher order PT-vanishing structure was introduced in \cite{AKLL}. Such structures can be realized by multilayered circular structures and were used in an essential way for dramatic enhancement of near cloaking. Like the neutral inclusion, weakly neutral inclusions are related to effective properties of composites. In fact, the leading order term in the asymptotic expansion of the effective properties of two phase dilute composites, as the volume fraction tends to zero, is given by the PT (see \cite{AKT05} and references therein). Thus, if the inclusions are weakly neutral, then the leading order term vanishes.

We emphasize that a simply connected inclusion (with the constant conductivity) cannot be weakly neutral if the interface is perfectly bonding. In fact, it is shown \cite{AKBook1} that if $u$ is the solution to the perfectly bonding problem, then the following asymptotic expansion holds:
\beq\label{PTexpansion}
u(x)-a\cdot x = \frac{1}{\Go_d} \frac{\la Ma, x \ra}{|x|^d} + O(|x|^{-d}), \quad |x| \to \infty,
\eeq
where $\Go_d$ is the surface area of the unit sphere and $M$ the polarization tensor of the problem, which is determined by $\GO$ (and $\Gs_c$, $\Gs_m$). If $\GO$ is simply connected, then $M$ is positive- or negative-definite depending on the sign of $\Gs_c-\Gs_m$. Thus there is a constant $C$ independent of the unit vector $a$ such that
\beq\label{lower}
|u(x)-a \cdot x| \ge C|x|^{1-d}
\eeq
as $|x| \to \infty$ in some direction determined by $a$. In particular, the condition \eqnref{weakneucond} cannot be satisfied.

To construct weakly neutral inclusions of general shape using imperfect interfaces, we use the conformal transformation to pull back the problem to the unit disk. For this reason we assume the conductivity $\Gs_c$ of the inclusion is infinity. We also assume that $\Gs_m=1$, which is just for simplicity.
So, the problem \eqnref{LC} becomes
\beq\label{main}
\begin{cases}
	\Delta u=0 \quad &\mbox{in }\Rbb^2\setminus\ol{\GO},\\
	\Gb(u-\Gl)= 	\ds\frac{\p u}{\p\nu}\Big|_+ \quad&\mbox{on }\p \GO,\\
	\vspace{0.3em}
	u(x)-a \cdot x=O(|x|^{-1}) \quad&\mbox{as }|x|\rightarrow\infty,
\end{cases}
\eeq
where $\Gl$ is a constant which is uniquely determined by the condition $\int_{\p \GO}\frac{\p u}{\p \nu}\, ds=0$, i.e.,
\begin{equation*}
\Gl = \frac{\int_{\p \GO} \Gb u ds}{\int_{\p \GO} \Gb ds}.
\end{equation*}
The purpose of this paper is to construct $\Gb$ on $\p\GO$ for general $\GO$ so that the solution $u$ to \eqnref{main} satisfies \eqnref{weakneucond}.

If $\GO$ is simply connected, there is a conformal mapping from the exterior of the unit disk to the exterior of $\GO$ of the form
$$
\Phi(\Gz)=b_{-1}\Gz+b_0+\frac{b_1}{\Gz}+\dots.
$$
By dilating and translating $\GO$ if necessary, we assume that $b_{-1}=1$ and $b_0=0$. Then let
\begin{equation*}
b_{\GO}:= b_1 \quad\mbox{and}\quad \Phi_\GO:=\Phi,
\end{equation*}
so that $\Phi_\GO$ takes the form
\beq\label{conformal}
\Phi_\GO(\Gz)=\Gz + \frac{b_\GO}{\Gz}+\dots, \quad |\Gz|>1. 
\eeq
We emphasize that
\beq\label{bGOone}
|b_\GO| <1.
\eeq
In fact, we have $|b_\GO| \le 1$ as a consequence of area theorem (or the Bieberbach conjecture). However, in the extreme case when $|b_\GO|=1$, the conformal mapping is of the form $\Gz+b_\GO \Gz^{-1}$, which maps the exterior of the unit disk to the exterior of a slit. So such cases are excluded.  Thus we have \eqnref{bGOone}.

The following is the main result of this paper.

\begin{thm}\label{main_thm}
Let $\GO$ be a bounded simply connected domain in $\Rbb^2$ with the Lipschitz boundary such that the conformal mapping $\Phi_\GO$ takes the form \eqnref{conformal}. Assume that
\beq\label{assumebGO}
|b_\GO|\leq 2-\sqrt{3}.
\eeq
Define $\Gb$ on $\p\GO$ by
\beq\label{Gbcon}
\Gb(\Phi_\GO(e^{i\Gt})) = \left(\frac{1}{1+|b_\GO|} + \frac{1}{1-|b_\GO|} -1 + \left(\frac{2}{1+|b_\GO|} - \frac{2}{1-|b_\GO|}\right) \cos2\Gt\right) \frac{1}{|\Phi_\GO'(e^{i\Gt})|}.
\eeq
Then $(\GO,\Gb)$ is weakly neutral to all uniform fields, namely, the solution $u$ to \eqnref{main} satisfies \eqnref{weakneucond} for all $a$ with $|a|=1$.
\end{thm}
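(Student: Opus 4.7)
The plan is to pull back \eqref{main} to the exterior of the unit disk via the conformal map $\Phi_\GO$. Because the Laplacian is conformally invariant in two dimensions, $\tilde u := u \circ \Phi_\GO$ is harmonic on $\{|\Gz|>1\}$, and (using that under $\Phi_\GO$ the outward normal derivative scales by $1/|\Phi_\GO'|$) the impedance condition in \eqref{main} transforms into
\begin{equation*}
\tilde\Gb(\Gt)(\tilde u - \Gl) = \left.\frac{\p \tilde u}{\p r}\right|_{r=1^+}, \qquad \tilde\Gb(\Gt) := \Gb(\Phi_\GO(e^{i\Gt}))|\Phi_\GO'(e^{i\Gt})|.
\end{equation*}
Substituting \eqref{Gbcon} and, after an irrelevant rotation of $\GO$, assuming $b_\GO = b\ge 0$ is real, yields the explicit formula $\tilde\Gb(\Gt) = (1+b^2-4b\cos 2\Gt)/(1-b^2)$. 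A direct check shows $\tilde\Gb\ge 0$ on $[0,2\pi]$ if and only if $b\le 2-\sqrt 3$, so the hypothesis \eqref{assumebGO} is exactly the physical admissibility condition on $\Gb$.

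Expand $\tilde u(re^{i\Gt}) = a_1 r\cos\Gt + a_2 r\sin\Gt + \sum_{n\ge 1}r^{-n}(A_n\cos n\Gt+B_n\sin n\Gt)$. Using $a\cdot\Phi_\GO(\Gz) = a\cdot\Gz + r^{-1}(a_1 b\cos\Gt - a_2 b\sin\Gt) + O(|\Gz|^{-2})$, the weak neutrality condition \eqref{weakneucond} translates to the algebraic requirements $A_1 = a_1 b$ and $B_1 = -a_2 b$ on the first Laurent coefficients. By linearity it suffices to treat $a = (1,0)$ and $a = (0,1)$ separately; the symmetries $\Gt\mapsto-\Gt$ and $\Gt\mapsto\Gt+\pi$ of both the driving field and $\tilde\Gb$ kill all modes of the opposite trigonometric parity and all even-$n$ modes, reducing each case to a single family of odd modes. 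Substituting the series into the transformed interface condition and using the product formula $\cos 2\Gt\cos n\Gt = \tfrac12(\cos(n-2)\Gt + \cos(n+2)\Gt)$ produces, for $a = (1,0)$ with $U_1 := 1+A_1$ and $U_n := A_n$ ($n\ge 3$), the infinite tridiagonal system
\begin{equation*}
(p+q/2+1)U_1 + (q/2)U_3 = 2, \qquad (p+n)U_n + (q/2)(U_{n-2} + U_{n+2}) = 0 \quad (n\ge 3 \text{ odd}),
\end{equation*}
with $p := (1+b^2)/(1-b^2)$ and $q := -4b/(1-b^2)$. The key identity $p+q/2+1 = 2/(1+b)$ rewrites the first equation as $A_1 - \tfrac{b}{1-b}A_3 = b$, so the weak neutrality goal $A_1 = b$ is equivalent to $A_3 = 0$ (and analogously $B_3 = 0$ for the sine case).

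The main task is therefore to verify that the physically admissible solution of this tridiagonal system satisfies $A_3 = 0$. I would approach this by the ansatz $\tilde u = \mathrm{Re}(\Gz + b/\Gz) + v$, which has the desired first two Laurent coefficients built in; a short computation, again using $p+q/2+1 = 2/(1+b)$, shows that the residual left by the leading part in the interface equation is a pure multiple of $\cos 3\Gt$, so $v$ solves the same imperfect interface problem on $\{|\Gz|>1\}$ with a $\cos 3\Gt$ source. The hypothesis $b\le 2-\sqrt 3$ makes $\tilde\Gb\ge 0$, giving coercivity of the associated bilinear form and hence unique solvability of the $v$-problem in the space of bounded harmonic functions decaying at infinity. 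The hardest step---where I expect the bulk of the work to lie---is to prove that this $v$ carries no $\cos\Gt$ Fourier mode, equivalently that the tridiagonal coupling back from the $\cos 3\Gt$-forced higher modes to the $\cos\Gt$ mode vanishes. I would derive this via a continued-fraction analysis of the decaying tail of the recurrence, combined with the positivity $\tilde\Gb\ge 0$ guaranteed by the sharp bound $b\le 2-\sqrt 3$, to extract the cancellation $A_3 = 0$ (and analogously $B_3 = 0$) that yields weak neutrality for all uniform fields $a$ with $|a|=1$.
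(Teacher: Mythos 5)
Your route faithfully reproduces the paper's: conformal transfer to the unit disk, the two-mode ansatz $\Gg = \Gg_0 + 2\Gg_2\cos 2\Gt$, the infinite tridiagonal system for the odd Fourier modes (your matrix with diagonal entries $p+q/2+1,\,p+3,\,p+5,\dots$ and off-diagonal $q/2 = \Gg_2$ is exactly the paper's $\Acal_N$, and up to the scaling $U_n = -2\Gvf'_{1,n}$ your unknowns are the paper's layer-density coefficients), the identity $p+q/2+1 = 2/(1+b)$, and the reduction of weak neutrality to $A_3=0$, which is precisely the paper's implicit claim $\Gvf'_{1,3}=0$ (equivalent to \eqref{Rephi11}). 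Everything through that reformulation is correct and coincides with the paper.

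The gap is the last step, which you correctly flag as the crux but whose sketched resolution cannot work, because the target $A_3=0$ is incompatible with the very recursion you wrote down: putting $U_3=0$ in the $n=3$ line forces $U_5=-U_1=-(1+b)\neq 0$, the $n=5$ line then forces $U_7 = 2(p+5)(1+b)/q$, of order $1/b$, and iterating, $|U_{2k+1}|$ grows without bound, contradicting the decay of the Fourier coefficients of an $H^{1/2}(\p\GD)$ function. Hence the unique decaying solution necessarily has $A_3\neq 0$, so $A_1 = b + \frac{b}{1-b}A_3\neq b$, and no continued-fraction argument on the tail can change this. Be aware that the paper's proof reaches the equivalent claim \eqref{Rephi11} and asserts it is ``easy to see'' from the tridiagonal inverse formula, but the $N\to\infty$ limit of the $(1,1)$-entry of $\Acal_N^{-1}$ is $\bigl(d_1-\Gg_2^2\,\tau_3/\tau_2\bigr)^{-1}$ with $\tau_3/\tau_2$ tending to a nonzero continued fraction, not $1/d_1$; a numerical check at $b=0.1$ gives $\Gvf'_{1,1}\approx -0.5531$ rather than $-(1+b)/2 = -0.55$, i.e.\ $T_{11}/(2\pi)\approx 0.106\neq 0.1$. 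You have isolated the right difficulty, but the cancellation you are trying to prove does not occur; closing the gap would require a genuinely different argument or a richer interface parameter $\Gg$ beyond the pure $\cos 2\Gt$ ansatz.
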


The function $\Gb$ is well defined for any simply connected domain $\GO$ because of \eqnref{bGOone}. However, the assumption \eqnref{assumebGO} is imposed to guarantee that the function $\Gb$ is non-negative, which we need to ensure that \eqnref{main} has a unique solution. We assume in the theorem that the conformal mapping $\Phi_\GO$ takes the form \eqnref{conformal}. But this is not a restriction. In fact, once we construct $\Gb$ under this assumption, then one can construct $\Gb$ for a general $\GO$ using the translation and rotation formula (Lemma \ref{transrot}).

We also provide some numerical examples of solutions to \eqnref{main} with $\Gb$ in \eqnref{Gbcon} and compare them with solutions with perfectly bonding interfaces. They clearly demonstrate that weak neutrality \eqnref{weakneucond} is achieved.

This paper is organized as follows. In section 2, we present the solution to \eqref{main} using layer potential techniques. In section 3, we construct weakly neutral inclusions using conformal transformation.  Numerical results are given in section 4. This paper ends with a short discussion. Appendix is to prove that ellipses (ellipsoids) are the only shape for neutral inclusions when the background conductivity is anisotropic.

\section{Representation of the solution}

In this section we obtain a representation formula of the solution to \eqnref{main} from which one can see that the leading order term of the asymptotic expansion at infinity is described by the polarization tensor. Some of the arguments of this section are from \cite{Li}.

The representation is expressed in terms of layer potentials. Let
\begin{equation*}
\GG(x)=\frac{1}{2\pi} \ln |x|,
\end{equation*}
the fundamental solution to the Laplacian in two dimensions. The single and double layer potentials of a function $\Gvf$ on $\p \GO$ are defined to be
\begin{align*}
\Scal_{\p \GO}[\Gvf](x) &:= \int_{\p \GO} \GG(x-y) \Gvf(y)\, ds(y), \quad x\in \Rbb^2, \\
\DO[\psi](x) &:= \int_{\p \GO} \p_{\nu_y}\GG(x-y) \Gvf(y)\, ds(y), \quad x\in \Rbb^2 \setminus \p \GO,
\end{align*}
where $\p_{\nu_y}$ denotes outward normal derivative with respect to $y$-variables. It is well known (see, for example, \cite{AmKa07Book2}) that the single and double layer potentials satisfy the following jump relations:
\begin{align}
\p_\nu \Scal_{\p \GO}[\Gvf](x)\Big|_{\pm} &=(\pm \ds\frac{1}{2}I+\Kcal_{\p \GO}^{*})[\Gvf](x), \quad\mbox{a.e. } x\in\p \GO, \label{singlejump} \\
\Dcal_{\p \GO}[\Gvf](x)\big|_{\pm} &=(\mp\ds\frac{1}{2}I+\Kcal_{\p \GO})[\Gvf](x), \quad\mbox{a.e. } x\in\p \GO,
\label{doublejump}
\end{align}
where the operator $\KO$ on $\p \GO$ is defined by
\begin{equation*}
\Kcal_{\p \GO}[\Gvf](x)= \mbox{p.v.} \int_{\p \GO} \p_{\nu_y}\GG(x-y)  \Gvf(y)\, ds(y),
\end{equation*}
and $\Kcal_{\p \GO}^*$ is the $L^2$-adjoint of $\Kcal_{\p\GO}$. Here p.v. stands for the Cauchy principal value.

Let $M_\Gb$ be the multiplication operator by $\Gb$. Then the solution $u$ to \eqnref{main} can be represented as
\beq\label{solrep1}
u(x)= a \cdot x - \left( \Scal_{\p \GO} M_\Gb -\Dcal_{\p \GO} \right ) [\psi](x), \quad x\in\Rbb^2\setminus \p \GO,
\eeq
for some function $\psi \in H^{1/2}(\p \GO)$. In fact, if $\psi$ satisfies the integral equation
\beq\label{intPcal}
\Pcal_{\p \GO}[\psi]= - a \cdot \nu \quad\mbox{on } \p \GO,
\eeq
where the operator $\Pcal_{\p \GO}$ is defined by
\begin{equation*}
\Pcal_{\p \GO}: = \left( \frac{1}{2}I-\Kcal_{\p \GO}^{*}\right)M_{\Gb} + \p_\nu\DO,
\end{equation*}
one can easily see using jump relations \eqnref{singlejump} and \eqnref{doublejump} that the function $u$ defined by \eqnref{solrep1} satisfies the interface conditions (the second line in \eqnref{main}). Moreover, if in addition $\psi$ satisfies
\beq\label{Gbzero}
\int_{\p\GO} \Gb \psi ds=0,
\eeq
then $u$ satisfies the decay condition at $\infty$ (the last line in \eqnref{main}). Thus $u$ is the solution to \eqnref{main}.

Let us now discuss about solvability of the integral equation \eqnref{intPcal}. For ease of notation we let
$$
X:= H^{-1/2}(\p \GO), \quad X_0:= H_0^{-1/2}(\p \GO), \quad Y:= H^{1/2}(\p \GO), \quad Y_0:= H^{1/2}_0(\p \GO).
$$
We suppose that $\Gb$ is smooth enough so that $M_\Gb$ maps $Y$ into $Y$. Let
$$
Y_\Gb := \{ \psi \in Y : \psi \mbox{ satisfies \eqnref{Gbzero}} \}.
$$
Since $\Kcal_{\p \GO}^{*}$ maps $X_0$ into $X_0$, and $\p_\nu\DO$ maps $Y$ into $X_0$, we infer that $\Pcal_{\p \GO}$ maps $Y_\Gb$ into $X_0$.

\begin{prop}\label{prop}
The operator $\Pcal_{\p \GO}: Y_\Gb \to X_0$ is invertible.
\end{prop}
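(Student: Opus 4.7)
The plan is to prove invertibility of $\Pcal_{\p\GO}: Y_\Gb \to X_0$ by establishing (a) Fredholmness of index zero and (b) injectivity, which together yield bijectivity. For (a) I would regard $\Pcal_{\p\GO}$ as a perturbation of its principal part $\p_\nu\DO$: since $\Gb$ is smooth enough that $M_\Gb$ is bounded on $Y=H^{1/2}(\p\GO)$ and $\KOS$ is bounded on $Y$, the summand $(\tfrac12 I - \KOS)M_\Gb$ maps $Y$ continuously into itself, and composing with the compact Sobolev embedding $Y \hookrightarrow X = H^{-1/2}(\p\GO)$ yields a compact operator $Y \to X$. The principal part $\p_\nu\DO$ is the negative of the two-dimensional hypersingular integral operator $W$, and by the standard coercivity $\langle W\phi,\phi\rangle \gtrsim \|\phi\|^2_{H^{1/2}/\Rbb}$ together with Lax--Milgram, it induces an isomorphism $Y/\Rbb \to X_0$. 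Under the standing assumption $\Gb \ge 0$ with $\int_{\p\GO}\Gb \, ds > 0$, constants meet $Y_\Gb$ only at $0$, so $Y = Y_\Gb \oplus \Rbb$ and therefore $\p_\nu\DO|_{Y_\Gb}: Y_\Gb \to X_0$ is itself an isomorphism; adding the compact perturbation yields that $\Pcal_{\p\GO}: Y_\Gb \to X_0$ is Fredholm of index zero.

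For (b), suppose $\psi \in Y_\Gb$ satisfies $\Pcal_{\p\GO}[\psi] = 0$, and set $u := \DO[\psi] - \SO[M_\Gb\psi]$ on $\Rbb^2 \setminus \p\GO$ (this is \eqnref{solrep1} with $a=0$). The jump relations \eqnref{singlejump}--\eqnref{doublejump} give
\begin{equation*}
\p_\nu u|_- = (\tfrac12 I - \KOS)[\Gb\psi] + \p_\nu\DO[\psi] = \Pcal_{\p\GO}[\psi] = 0,
\end{equation*}
so $u$ is harmonic in $\GO$ with zero Neumann data, hence $u \equiv \Gl$ is constant in $\GO$. Using $\Pcal_{\p\GO}[\psi] = 0$ to eliminate $\p_\nu\DO[\psi]$ from the analogous expression on the $+$ side gives $\p_\nu u|_+ = -\Gb\psi$, and the double-layer jump gives $u|_+ - u|_- = -\psi$; these two identities imply $\Gb(u|_+ - \Gl) = \p_\nu u|_+$ on $\p\GO$ automatically. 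Moreover $\int_{\p\GO}\Gb\psi \, ds = 0$ kills the logarithmic term in $\SO[\Gb\psi]$ at infinity, so $u = O(|x|^{-1})$. Applying Green's identity to $(u-\Gl)\Delta u = 0$ in $\Rbb^2\setminus\ol{\GO}$ (the boundary integral on the circle of radius $R$ vanishes as $R\to\infty$ because $u = O(|x|^{-1})$ and $\nabla u = O(|x|^{-2})$) yields
\begin{equation*}
\int_{\p\GO} \Gb (u|_+ - \Gl)^2 \, ds = -\int_{\Rbb^2 \setminus \ol{\GO}} |\nabla u|^2 \, dx,
\end{equation*}
and the opposite signs force both sides to vanish. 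Hence $u \equiv 0$ in the exterior; then $\Gl = 0$ (using $\int\Gb > 0$), and finally $\psi = u|_- - u|_+ = 0$.

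The step I expect to require the most care is part (a), specifically identifying $\p_\nu\DO$ as an isomorphism $Y/\Rbb \to X_0$ on a Lipschitz boundary in two dimensions; this rests on the standard coercivity of the hypersingular operator on $H^{1/2}/\Rbb$, which I would import from the layer-potential literature. Once this is in hand, the compact perturbation argument of (a) and the energy estimate of (b) close the proof.
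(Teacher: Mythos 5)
Your proof is correct and follows the same two-step strategy as the paper (compact perturbation of $\p_\nu\DO$ plus an energy argument for injectivity), but with two genuine departures. First, to see that $\p_\nu\DO: Y_\Gb \to X_0$ is invertible, you invoke coercivity of the hypersingular operator on $H^{1/2}/\Rbb$ and the splitting $Y=Y_\Gb\oplus\Rbb$; the paper instead argues constructively, solving the interior Neumann problem for given $g\in X_0$, writing its trace as $\Dcal_{\p\GO}[f]$ via $(\tfrac12 I+\Kcal_{\p\GO})^{-1}$, and then subtracting the constant $\bigl(\int_{\p\GO}\Gb f\,ds\bigr)/\bigl(\int_{\p\GO}\Gb\,ds\bigr)$ to land in $Y_\Gb$. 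Your route is shorter if one is willing to cite coercivity of $W$; the paper's is self-contained in the layer-potential framework it has already set up. Second, your injectivity step rewrites the boundary energy as $\int_{\p\GO}\Gb\,(u|_+-\Gl)^2\,ds$, whereas the paper writes it as $\int_{\p\GO}\frac{1}{\Gb}(\p_\nu u|_+)^2\,ds$. These are formally equal via the interface condition $\p_\nu u|_+=\Gb(u|_+-\Gl)$, but your version is actually the more robust one: it never divides by $\Gb$, so it is meaningful even when $\Gb$ vanishes on part of $\p\GO$ (which is allowed, since the hypotheses only require $\Gb\ge 0$ with $\int_{\p\GO}\Gb\,ds>0$; indeed the $\Gb$ of the main theorem does vanish at the extreme value $|b_\GO|=2-\sqrt{3}$). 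Your closing step, deducing $\psi=u|_--u|_+=0$ after $\Gl=0$, is also a cleaner finish than the paper's, which takes a second normal derivative and combines the two resulting identities to force $\Gb\psi=0$ and then $\psi=0$.
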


\pf
Let us first observe that $\p_\nu\DO$ maps $Y$ into $X$, and $\Pcal_{\p \GO} - \p_\nu\DO= ( 1/2I-\Kcal_{\p \GO}^{*})M_{\Gb}$ maps $Y$ into $Y$. Since the embedding $Y\hookrightarrow X$ is compact, we may view $\Pcal_{\p \GO}: Y_\Gb \to X_0$ as a compact perturbation of $\p_\nu\DO$.

To prove that $\p_\nu\DO$ is invertible from $Y_\Gb$ onto $X_0$, let $g \in X_0$ and let $u$ be the unique solution in $H^1(\GO)$, under the normalization $\int_{\p \GO} u =0$, of $\GD u=0$ in $\p \GO$ satisfying $\p_\nu u =g$ on $\p \GO$. Define $f \in Y$ by
$$
f= \Big( \frac{1}{2} I + \Kcal_{\p \GO} \Big)^{-1} [u|_{\p \GO}],
$$
so that $\Dcal_{\p \GO}[f] = u|_{\p \GO}$. Note that since $\int_{\p \GO} u ds =0$, we have in fact $f \in Y_0$ and $\p_\nu\DO[f]=g$. Now we define
$$
\psi := f - \frac{\int_{\p \GO} \Gb f ds}{\int_{\p \GO} \Gb ds},
$$
so that $\psi \in Y_\Gb$. Since $\p_\nu\DO[1]=0$, we have $\p_\nu\DO[\psi]=g$ as desired.

We now show that $\Pcal_{\p \GO}$ is invertible from $Y_\Gb$ onto $X_0$. To do so it suffices to show that $\Pcal_{\p \GO}$ is injective since it is a compact perturbation of an invertible operator $\p_\nu\DO$. Suppose that
\beq\label{Pcalinjec}
\Pcal_{\p \GO}[\psi]= \left( \left( \frac{1}{2}I-\Kcal_{\p \GO}^{*}\right)M_{\Gb} + \p_\nu\DO \right)[\psi] = 0.
\eeq
Then $\psi$ is the solution to the system of integral equations \eqnref{intPcal} with the zero right-hand side. It means that $u$ defined by
$$
u(x)=-\left(\Scal_{\p \GO}M_\Gb-\Dcal_{\p \GO}\right)[\psi](x), \quad x \in \Rbb^2\setminus \GO,
$$
is the solution in $H^1(\Rbb^2 \setminus \GO)$ to \eqnref{main} with $a=0$. Then we have
\begin{align*}
0 &= \int_{\Rbb^2 \setminus \GO} |\nabla u|^2 dx + \int_{\p \GO} u \frac{\p u}{\p\nu}\Big|_+ ds \\
& = \int_{\Rbb^2 \setminus \GO} |\nabla u|^2 dx + \int_{\p \GO} \frac{1}{\Gb} \left( \frac{\p u}{\p\nu}\Big|_+ \right)^2 ds ,
\end{align*}
where the second equality holds since $\int_{\p \GO} \frac{\p u}{\p\nu}|_+ ds =0$. It then follows that $u=0$ in $\Rbb^2 \setminus \GO$ because $\Gb \geq 0$, namely,
$$
-\left(\Scal_{\p \GO}M_\Gb-\Dcal_{\p \GO}\right)[\psi](x)=0, \quad x \in \Rbb^2\setminus \GO.
$$
By taking the normal derivative, we obtain from \eqnref{singlejump} that
$$
\left( \left( \frac{1}{2}I+\Kcal_{\p \GO}^{*}\right)M_{\Gb} - \frac{\p}{\p\nu}\DO \right)[\psi]=0 \quad\mbox{on } \p \GO.
$$
This together with \eqnref{Pcalinjec} yields that $\psi=0$. This completes the proof.
\qed

We obtain the following corollary.
\begin{cor}
The problem \eqnref{main} has a unique solution in $H^1(\Rbb^2 \setminus \GO)$. The solution can be represented as \eqnref{solrep1}.
\end{cor}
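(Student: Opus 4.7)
The plan is to obtain the corollary as a direct consequence of Proposition \ref{prop} together with the discussion preceding it. For existence, given a unit vector $a$, first observe that $-a\cdot\nu$ belongs to $X_0$, since $\int_{\p\GO} a\cdot\nu\, ds = 0$ by the divergence theorem applied to the constant vector field $a$. Proposition \ref{prop} then produces a unique $\psi \in Y_\Gb$ satisfying \eqnref{intPcal}. I define $u$ by \eqnref{solrep1} and verify that it solves \eqnref{main}: harmonicity in $\Rbb^2\setminus\ol\GO$ is immediate from the harmonicity of $\Scal_{\p\GO}[\Gb\psi]$ and $\Dcal_{\p\GO}[\psi]$ off $\p\GO$; the interface condition holds by the jump relations \eqnref{singlejump}--\eqnref{doublejump} via exactly the computation carried out between \eqnref{solrep1} and \eqnref{intPcal}; and the decay estimate $u(x)-a\cdot x = O(|x|^{-1})$ holds because the condition $\psi \in Y_\Gb$, i.e. $\int_{\p\GO}\Gb\psi\, ds = 0$, removes the logarithmic growth of $\Scal_{\p\GO}[\Gb\psi]$ at infinity, while $\Dcal_{\p\GO}[\psi]$ already decays like $O(|x|^{-1})$.

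For uniqueness in $H^1(\Rbb^2\setminus\GO)$, let $u_1, u_2$ be two solutions and set $w := u_1 - u_2$. Then $w$ solves \eqnref{main} with $a=0$, and the same energy identity used at the end of the proof of Proposition \ref{prop} yields
\begin{equation*}
0 = \int_{\Rbb^2\setminus\GO} |\nabla w|^2\, dx + \int_{\p\GO} \frac{1}{\Gb}\Big(\frac{\p w}{\p\nu}\Big|_+\Big)^2 ds,
\end{equation*}
which forces $w \equiv 0$ since $\Gb \geq 0$. The representation \eqnref{solrep1} is then automatic: the $u$ built above from Proposition \ref{prop} is a solution expressed in the form \eqnref{solrep1}, and by uniqueness every $H^1$-solution coincides with it.

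I do not expect any substantial obstacle, as the statement is essentially bookkeeping around Proposition \ref{prop}. The one point that requires care is justifying the integration by parts on the unbounded domain $\Rbb^2\setminus\GO$ in the uniqueness argument; this relies on the decay $w = O(|x|^{-1})$ and $\nabla w = O(|x|^{-2})$ inherited from the layer potential representation, so that the flux over a large circle $|x|=R$ vanishes as $R\to\infty$. Once this is in place, the cancellation of the boundary term $\int_{\p\GO} w\, \p_\nu w|_+\, ds$ with $\int_{\p\GO} \Gb^{-1}(\p_\nu w|_+)^2\, ds$ follows from the interface condition together with $\int_{\p\GO}\p_\nu w|_+\, ds = 0$, exactly as in the proof of the proposition.
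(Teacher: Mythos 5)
Your proof is correct and follows exactly the route the paper intends: existence and the representation \eqnref{solrep1} come from inverting $\Pcal_{\p \GO}$ (Proposition \ref{prop}) together with the jump relations and the decay of the layer potentials once $\int_{\p\GO}\Gb\psi\,ds=0$, and uniqueness comes from the same energy identity already used to prove injectivity in the proof of that proposition. One cosmetic point: in the uniqueness step the decay $\nabla w=O(|x|^{-2})$ cannot literally be ``inherited from the layer potential representation,'' since $w=u_1-u_2$ is an arbitrary $H^1$-solution and not a priori of that form; rather, $w=O(|x|^{-1})$ is imposed in \eqnref{main} and the gradient decay then follows from harmonicity of $w$ near infinity (Laurent expansion or interior gradient estimates), which is what makes the flux term on $|x|=R$ vanish.
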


Since the following expansion holds for $y \in \p \GO$ and $|x| \to \infty$:
$$
\GG(x-y)= \GG(x) - \nabla \GG(x) \cdot y + O(|x|^{-2}),
$$
and further $\int_{\p \GO} \Gb \psi ds=0$, we obtain from \eqnref{solrep1} that
\begin{equation*}
u(x)= a \cdot x + \int_{\p \GO} (\nabla \Gamma(x) \cdot y M_\Gb - \nabla \Gamma(x) \cdot \nu_y ) \psi(y) \, ds(y) + O(|x|^{-2}), \quad\mbox{as } |x| \to \infty.
\end{equation*}
Let, for $j=1,2$, $\Gvf_j$ be the solution to
\beq\label{intPcalnui}
\Pcal_{\p \GO}[\Gvf_j]= - \nu_j \quad\mbox{on } \p \GO,
\eeq
where $\nu_j$ is the $j$-th component of $\nu$. Then, $\psi=a_1 \Gvf_1 + a_2 \Gvf_2$, and we have
$$
u(x)= a \cdot x + \int_{\p \GO} (\nabla \Gamma(x) \cdot y M_\Gb - \nabla \Gamma(x) \cdot \nu_y ) (a_1 \Gvf_1 + a_2 \Gvf_2)(y) \, ds(y) + O(|x|^{-2}),
$$
which can be written as
\beq\label{faf-field}
u(x)= a \cdot x + \frac{1}{2\pi} \frac{\la Ta, x \ra}{|x|^2} + O(|x|^{-2}), \quad\mbox{as } |x| \to \infty.
\eeq
Here, $T=T(\GO, \Gb)=(T_{ij})_{i,j=1}^2$ is defined by
\beq\label{Tdef}
T_{ij}(\GO,\Gb):=\int_{\p \GO}(y_i M_{\Gb}- \nu_i (y))\Gvf_j(y)\, ds(y),
\eeq
which is the polarization tensor (abbreviated by PT) for the problem \eqnref{main}.

One can see easily by simple changes of variables that the PT enjoys the following properties.
\begin{lem}\label{transrot}
\begin{itemize}
\item[(i)]	The polarization tensor \eqref{Tdef} is invariant under translation.
\item[(ii)]	Let $\GO'$ be a domain and $\GO=\mc{R}\GO'$ where $\mc{R}$ is a rotation. Then
	$$
	T(\GO,\Gb)=\mc{R}T(\GO',\Gb\circ\mc{R})\mc{R}^\top,
	$$
	where $\mc{R}^\top$ denotes the transpose of $\mc{R}$.
\end{itemize}
\end{lem}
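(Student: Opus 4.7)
The plan is to reduce both statements to change-of-variables identities in the definition \eqnref{Tdef} of the PT, exploiting two basic facts: (a) the fundamental solution $\GG(x) = \frac{1}{2\pi}\ln|x|$ is invariant under rigid motions, and (b) the densities $\Gvf_j$ live in $Y_\Gb$, so $\int_{\p\GO}\Gb\,\Gvf_j\,ds = 0$. Once one knows that a pullback of $\Gvf_j$ solves the integral equation \eqnref{intPcalnui} on the transformed boundary, everything reduces to substitution in \eqnref{Tdef}.

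For (i), I translate $\GO$ by a vector $z_0$, set $\tilde\GO := \GO + z_0$, and define $\tilde\Gb$ and $\tilde\Gvf_j$ on $\p\tilde\GO$ by translation. Because $\GG(x-y)$ and $\p_{\nu_y}\GG(x-y)$ depend only on $x-y$, each of $\SO$, $\KOS$, and $\p_\nu\DO$ is conjugate under translation to its counterpart on $\p\tilde\GO$, so $\Pcal_{\p\tilde\GO}[\tilde\Gvf_j] = -\tilde\nu_j$. Substituting $y = \tilde y - z_0$ in $T_{ij}(\tilde\GO,\tilde\Gb)$ produces one extra term,
$$(z_0)_i \int_{\p\GO}\Gb\,\Gvf_j\,ds,$$
which vanishes because $\Gvf_j\in Y_\Gb$. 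This yields $T(\tilde\GO,\tilde\Gb)=T(\GO,\Gb)$.

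For (ii), I pull back to $\GO'$. Rotation invariance of $\GG$ together with the transformation rule $\nu(\mc{R}y') = \mc{R}\nu'(y')$ imply that, with weight $\Gb':=\Gb\circ\mc{R}$, the operator $\Pcal_{\p\GO'}$ is obtained from $\Pcal_{\p\GO}$ by conjugation with pullback along $\mc{R}$. Hence $\tilde\Gvf_j(y'):=\Gvf_j(\mc{R}y')$ satisfies
$$\Pcal_{\p\GO'}[\tilde\Gvf_j](y') = -\nu_j(\mc{R}y') = -\sum_{k=1}^{2}\mc{R}_{jk}\,\nu'_k(y').$$
By linearity and the invertibility from Proposition \ref{prop}, $\tilde\Gvf_j = \sum_k \mc{R}_{jk}\,\Gvf'_k$, where $\Gvf'_k$ is the density on $\p\GO'$ associated with $\nu'_k$. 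Plugging this together with $(\mc{R}y')_i = \sum_k \mc{R}_{ik}\,y'_k$ into \eqnref{Tdef} produces two factors of $\mc{R}$, giving
$$T_{ij}(\GO,\Gb) = \sum_{k,l}\mc{R}_{ik}\mc{R}_{jl}\,T_{kl}(\GO',\Gb\circ\mc{R}),$$
which is the matrix identity claimed.

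Neither half is deep; the one point requiring care is verifying the conjugation of $\Pcal_{\p\GO}$ piece by piece, since the single-layer, $\KOS$, and $\p_\nu\DO$ components each transform covariantly only because the kernels $\GG(x-y)$ and $\p_{\nu_y}\GG(x-y)$ are invariant under \emph{simultaneous} rigid motion of $x$ and $y$. The weight $\Gb$ must also be pulled back to keep $M_\Gb$ intertwined with the corresponding operator on the transformed boundary, which is precisely why the statement in (ii) involves $\Gb\circ\mc{R}$ rather than $\Gb$.
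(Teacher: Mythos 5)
Your proof is correct and is exactly the ``simple change of variables'' argument the paper alludes to without writing out; the two essential observations (the rigid-motion invariance of $\GG(x-y)$ and $\p_{\nu_y}\GG(x-y)$, and the vanishing of the extra term $(z_0)_i\int_{\p\GO}\Gb\,\Gvf_j\,ds$ because $\Gvf_j\in Y_\Gb$) are identified and used properly, and the pullback relation $R^*\Pcal_{\p\GO}=\Pcal_{\p\GO'}R^*$ with $\Gb'=\Gb\circ\mc{R}$ is the right intertwining. Nothing to add or correct.
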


\section{Construction of weakly neutral inclusions}

Let $u$ be the solution to \eqref{main}. Since $u(x)-a\cdot x$ tends to $0$ as $|x| \to \infty$ and $\GO$ is simply connected, there is a function $U$ analytic in $\Cbb\setminus\overline{\GO}$ such that
$$
U'=u.
$$
Here and throughout this paper $U'$ and $U''$ respectively stand for the real and imaginary parts of $U$, namely, $U=U'+ i U''$.
By Cauchy-Riemann equations, one can see that the second line of \eqref{main} reads
\beq
\Gb\left(\frac{1}{2}(U+\ol{U})-\Gl\right)=\frac{1}{2i}\frac{d(U-\ol{U})}{ds} \quad \mbox{on }\p \GO,
\label{trans}
\eeq
where $\frac{d}{ds}$ is the tangential derivative. Since $U$ is analytic near $\infty$, it admits the following expansion:
$$
U(z)=\Ga z+\frac{\Ga_1(\Ga)}{z}+\frac{\Ga_2(\Ga)}{z^2}+\dots,
$$
where $\Ga=a_1-ia_2$ ($a=(a_1, a_2)$ is the uniform field appearing in \eqnref{main}). The weakly neutral condition \eqref{weakneucond} is equivalent to
\beq\label{a1}
\Ga_1(\Ga)=0 \quad \mbox{for  all }  \Ga~(\mbox{or equivalently for } \Ga=1,i).
\eeq

Suppose that the conformal mapping $\Phi_\GO$ is of the form \eqnref{conformal}.
Let $V^\Ga=U\circ\Phi_\GO$.
Then we have
\begin{equation*}
\begin{split}
V^\Ga(\Gz)&=\Ga\Phi_\GO(\Gz)+\frac{\Ga_1(\Ga)}{\Phi_\GO(\Gz)}+\frac{\Ga_2(\Ga)}{\Phi_\GO(\Gz)^2}+\dots\\
&=\Ga\Gz+\frac{\Ga b_\GO+\Ga_1(\Ga)}{\Gz}+\dots.
\end{split}
\end{equation*}
The transmission condition \eqref{trans} is transformed by $\Phi_\GO$ to
\begin{equation*}
\Gb(\Phi_\GO(\Gz))\left(\frac{V^\Ga+\ol{V^\Ga}}{2}-\Gl\right)=\frac{1}{2i}\frac{d(V^\Ga-\ol{V^\Ga})}{ds}\frac{1}{\left|\Phi_\GO'(\Gz)\right|} \quad \mbox{on } \p \GD,
\end{equation*}	
where $\Delta$ denotes the unit disk. Let
\beq\label{beta}
\Gg(\Gz)=\Gb(\Phi_\GO(\Gz))\left|\Phi_\GO'(\Gz)\right|,\quad |\Gz|=1.
\eeq
Then $\Gg$ is the imperfect interface parameter in $\Gz$-plane.

Note that \eqref{a1} is fulfilled if and only if $V^\Ga$ takes the form
\beq\label{VGaexp}
V^\Ga(\Gz)=\Ga\Gz+\frac{\Ga b_\GO}{\Gz}+\dots \quad \mbox{as } |\Gz|\rightarrow \infty,
\eeq
for $\Ga=1, i$. Let $v^\Ga := (V^\Ga)'$ for $\Ga=1, i$. Then $v^\Ga$ is the solution to \eqnref{main} with $\GO$ replaced by $\GD$ and $\Gb$ replaced by $\Gg$. The uniform field is given by $a=(1,0)$ if $\Ga =1$ and $a=(0,1)$ if $\Ga=i$.
The expansion at infinity takes the form
$$
v^\Ga(x)= x_1 + \frac{b'_\GO x_1 + b''_\GO x_2}{|x|^2} + \mbox{higher order terms}
$$
if $\Ga=1$, and
$$
v^\Ga(x)= x_2 + \frac{b''_\GO x_1 - b'_\GO x_2}{|x|^2} + \mbox{higher order terms}
$$
if $\Ga=i$.

In view of \eqnref{faf-field} we infer that in order for \eqnref{a1} to be satisfied, the polarization tensor $T(\GD,\Gg)$ needs to be of the form
\beq\label{Tb}
T(\GD,\Gg)=2\pi
\begin{bmatrix}
b'_\GO& b''_\GO \\
b''_\GO  & -b'_\GO\\
\end{bmatrix} .
\eeq
Note that the eigenvalues of the above matrix are $\pm|b_\GO|$. So after rotation if necessary it is sufficient to have
\beq\label{Tgamma}
T(\GD,\Gg)=2\pi
\begin{bmatrix}
|b_\GO|& 0\\
0  & -|b_\GO|\\
\end{bmatrix} .
\end{equation}

Then the problem of finding an imperfect interface parameter $\Gb$ such that \eqref{weakneucond} holds in $z$-plane is transformed to the problem of finding an imperfect interface parameter $\gamma$ such that \eqref{Tgamma} holds in $\Gz$-plane.

We prove the following proposition.
\begin{prop}\label{main_result}
Let $\GD$ be the unit disk. For any complex number $b_\GO$ with $|b_\GO|\leq 2-\sqrt{3}$, define $\Gg$ on $\p\GD$ by
\beq\label{Ggdisk}
\Gg(e^{i\Gt})=\frac{1}{1+|b_\GO|}+\frac{1}{1-|b_\GO|}-1+\left(\frac{2}{1+|b_\GO|}-\frac{2}{1-|b_\GO|}\right)\cos2\Gt.
\eeq
Then $\Gg \ge 0$ and $(\Delta,\Gg)$ has the polarization tensor of the form of \eqref{Tgamma}.
\end{prop}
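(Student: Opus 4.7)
The proof naturally splits into two parts: verifying the pointwise inequality $\Gg \ge 0$, and computing the polarization tensor $T(\GD,\Gg)$.

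For the nonnegativity, I would first rewrite the given formula as $\Gg(e^{i\Gt}) = A + B\cos 2\Gt$ with
\begin{equation*}
A = \frac{1+b^2}{1-b^2}, \qquad B = -\frac{4b}{1-b^2}, \qquad b := |b_\GO|.
\end{equation*}
Since $B \le 0$, the infimum of $\Gg$ over $\partial\GD$ is attained at $\cos 2\Gt = 1$ and equals $A + B = (b^2 - 4b + 1)/(1-b^2)$. The quadratic $b^2 - 4b + 1$ has roots $2 \pm \sqrt 3$, so it is nonnegative on $[0,1)$ precisely when $b \le 2 - \sqrt 3$, which is the assumed hypothesis. This step is essentially algebraic and should be routine.

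For the polarization tensor I would solve \eqref{main} on $\GD$ with interface parameter $\Gg$ by separation of variables, treating the two canonical fields $a=(1,0)$ and $a=(0,1)$ in turn. For $a=(1,0)$ the reflection symmetries $\Gt \to -\Gt$ (which preserves both $a\cdot\nu = \cos\Gt$ and $\Gg$) and $\Gt \to \pi-\Gt$ (which flips $a\cdot\nu$ while preserving $\Gg$) force $\Gl = 0$ and restrict the exterior solution to odd cosine modes:
\begin{equation*}
v(r,\Gt) = r\cos\Gt + \sum_{k \ge 0} c_{2k+1}\, r^{-(2k+1)}\cos(2k+1)\Gt .
\end{equation*}
Substituting into the Robin relation $\Gg v = \partial_\nu v$ at $r=1$ and expanding via the product formula $\cos 2\Gt\cos n\Gt = \tfrac12(\cos(n-2)\Gt + \cos(n+2)\Gt)$, one obtains the tridiagonal infinite system (with $C_1 := 1+c_1$, $C_{2k+1}:= c_{2k+1}$ for $k \ge 1$)
\begin{align*}
(1 + A + B/2)\, C_1 + (B/2)\, C_3 &= 2, \\
(A + 2k+1)\, C_{2k+1} + (B/2)(C_{2k-1} + C_{2k+3}) &= 0,\qquad k \ge 1.
\end{align*}
The precise choice of $A$ and $B$ yields the key identities $A + B/2 = (1-b)/(1+b)$ and $A - B/2 = (1+b)/(1-b)$, which are exactly engineered so that the mode-1 equation combined with the tail of the recursion extracts $c_1 = b$. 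I would read off the PT using \eqref{faf-field}, giving $T_{11} = 2\pi b$ and $T_{12} = 0$. The parallel calculation with sine modes for $a=(0,1)$ yields the coefficient $d_1 = -b$ (the sign flip coming from $\cos 2\Gt \sin\Gt = \tfrac12(\sin 3\Gt - \sin\Gt)$, which produces the opposite-sign mode-1 diagonal $1+A-B/2 = 2/(1-b)$ instead of $2/(1+b)$), so $T_{22} = -2\pi b$ and $T_{21} = 0$. These four values assemble into \eqref{Tgamma}.

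The main obstacle is establishing $c_1 = b$ rigorously from the infinite coupled system, because the mode-1 equation alone only gives $c_1 = b + \tfrac{b}{1-b}c_3$, and $c_3$ in turn depends on the entire tail through a continued fraction. I would handle this by invoking the solvability guaranteed by Proposition~\ref{prop} (the operator $\Pcal_{\p\GD}$ is invertible on the appropriate space), then verifying the specific structure forced by the two identities above — essentially showing that the contribution of the tail $(c_3,c_5,\ldots)$ to the mode-1 equation sums to exactly the needed amount for $c_1 = b$. Alternatively, one could work directly in the complex-variable formulation of Section 3, expanding $V^\Ga(\Gz)$ and matching coefficients of $\Gz^{-n}$ on both sides of the transformed boundary condition, where the analogous cancellation structure becomes more transparent.
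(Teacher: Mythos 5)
Your nonnegativity verification is correct and slightly more explicit than the paper's, which merely posits $\Gg_0>0$ and $\Gg_0\ge 2|\Gg_2|$ without checking that $|b_\GO|\le 2-\sqrt{3}$ suffices. Your derivation of the tridiagonal system by separation of variables is equivalent to the paper's Fourier-analytic reduction (the paper expands the layer-potential density $\Gvf_1$ rather than the harmonic function itself; the translation is $\Gvf'_{1,k}=-C_{2k-1}/2$), so up to this point your argument runs parallel to the paper's, just on the PDE side rather than the integral-equation side.

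The proposal stops, however, at the one step that actually carries the proposition. You flag ``the main obstacle is establishing $c_1=b$ rigorously from the infinite coupled system'' and then offer only a sketch of two possible strategies without executing either, so this is not yet a proof. The paper's route, which you would need to reproduce or replace, is the following: truncate the tridiagonal system to the $N\times N$ matrix $\mathcal{A}_N$, invoke the explicit inversion formula of El-Mikkawy and Karawia to write $a_{11}(N)=\bigl(d_1-\Gg_2^2\,\tau_3/\tau_2\bigr)^{-1}$ with $d_1=\Gg_0+\Gg_2+1$, and let $N\to\infty$ to conclude $\Gvf'_{1,1}=-1/d_1$, equivalently $C_1=2/d_1=1+b$, i.e.\ $c_1=b$. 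Your observation that $c_3$ ``depends on the entire tail through a continued fraction'' is precisely the crux: $\tau_3/\tau_2$ is a truncated continued fraction in $\Gg_2^2$ and the $d_k$'s, and for $a_{11}(N)\to 1/d_1$ one needs this ratio to tend to zero, which is not automatic (the continued fraction converges to a nonzero limit of order $1/d_2$). The paper passes over this with ``it is easy to see,'' and you are right to single it out as the point of real difficulty; but neither your proposal nor its sketch of alternatives resolves it, so the argument is incomplete at exactly its central step.
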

Theorem \ref{main_thm} immediately follows from Proposition \ref{main_result}.

The rest of this section is devoted to the proof of Proposition \ref{main_result}. We first derive a general formula for the PT $T(\GD,\Gg)=(T_{ij})$. Recall from \eqref{Tdef} that
\beq\label{Tdefdisk}
T_{ij}=\int_{\p \GD}(y_i M_{\Gg} -\nu_i (y))\Gvf_j(y)\, ds(y),
\eeq
where $\Gvf_j$ is the solution to \eqref{intPcalnui}, namely,
\beq\label{G1}
\left( (\frac{1}{2} I - \Kcal_{\p \Delta}^{*}) M_\Gg + \p_\nu\mc{D}_{\p\Delta} \right) \Gvf_j =-\nu_j.
\eeq

Functions $\Gg$ and $\Gvf_j$, $j=1,2$, have the following Fourier expansions:
\beq\label{Fgamma}
\Gg=\sum_{n=-\infty}^{+\infty} \Gg_{n} e^{in\Gt}, \quad
\Gvf_j=\sum_{n=-\infty}^{+\infty} \Gvf_{j,n} e^{in\Gt}.
\eeq
Since $\Gg$ and $\Gvf_j$, $j=1,2$, are real functions on $\p \GD$, we  have
\beq
\Gg_{-n}=\ol{\Gg_{n}}\quad\mbox{and} \quad\Gvf_{j,-n}=\ol{\Gvf_{j,n}} \quad \mbox{for }n=1,\dots.
\label{conj}
\eeq
The condition \eqref{Gbzero} reads
\beq\label{L20}
\int_{\p \Delta}\Gg\Gvf_j\, ds=2\pi \sum_{k=-\infty}^{+\infty} \Gg_k \Gvf_{j,-k}=0, \quad\mbox{for } j=1,2.
\eeq

It is well known that
$$
\Kcal_{\p \GD}^{*}[e^{in\Gt}]=
\begin{cases}
0 \quad \mbox{if } n\neq 0, \\
\d\frac{1}{2} \quad \mbox{if } n=0,
\end{cases}
$$
and
$$
\p_\nu\mc{D}_{\p \GD}[e^{in\Gt}]=\frac{|n|}{2}e^{in\Gt} , \quad n=0,\pm 1,\pm2,\dots.
$$
See, for example, \cite{ACKLM} for proofs. With the help of these equalities, the integral equation \eqref{G1} becomes
\begin{equation}
\begin{cases}
\ds \sum_{n=-\infty}^{+\infty} \sum_{k=-\infty}^{+\infty} \Gg_k \Gvf_{1,n-k}e^{in\Gt} + \sum_{n=-\infty}^{+\infty} |n|\Gvf_{1,n}e^{in\Gt} = -e^{i\Gt}-e^{-i\Gt},\\
\ds \sum_{n=-\infty}^{+\infty} \sum_{k=-\infty}^{+\infty} \Gg_k \Gvf_{2,n-k}e^{in\Gt} + \sum_{n=-\infty}^{+\infty} |n|\Gvf_{2,n}e^{in\Gt} = ie^{i\Gt}-ie^{-i\Gt}.\\
\end{cases}
\label{int}
\end{equation}

Multiplying both sides  of the first equation of \eqref{int} by $e^{in\Gt},n=0,\pm1,\dots$, and integrating from $\theta=0$ to $\theta=2\pi$ we have the following system of $\Gg$ and $\Gvf_1$:
\begin{equation}\label{gammaphi1}
\begin{cases}
\sum \Gg_k \Gvf_{1,-1-k} + \Gvf_{1,-1}
=-1,\\
\sum \Gg_k \Gvf_{1,1-k} + \Gvf_{1,1}
=-1,\\
\sum \Gg_k \Gvf_{1,n-k} + |n|\Gvf_{1,n}=0\quad\mbox{if } n\neq \pm 1,\\
\end{cases}
\end{equation}
where the summation is over $k$ from $-\infty$ to $\infty$.
Considering the real part and the imaginary part of the \eqref{gammaphi1} separately, we have the following system
\begin{equation}
\begin{cases}
\sum\Gg_k \Gvf'_{1,-1-k}+\Gvf'_{1,1} =-1,\\
\sum\Gg_k  \Gvf''_{1,-1-k} - \Gvf''_{1,1} =0,\\
\sum\Gg_k  \Gvf'_{1,1-k} + \Gvf'_{1,1}
=-1,\\
\sum\Gg_k  \Gvf''_{1,1-k} + \Gvf''_{1,1}
=0,\\
\sum\Gg_k  \Gvf'_{1,n-k} +|n| \Gvf'_{1,n} =0 \quad \mbox{if } n\neq \pm 1,\\
\sum \Gg_k \Gvf''_{1,n-k} +|n|\Gvf''_{1,n}=0 \quad \mbox{if } n\neq \pm 1.
\end{cases}
\label{phi1}
\end{equation}

In the same way, we obtain the following system of equations from the second equation of \eqref{int}:
\begin{equation}
\begin{cases}
\sum\Gg_k \Gvf'_{2,-1-k}+\Gvf'_{2,1}
=0,\\
\sum\Gg_k \Gvf''_{2,-1-k}-\Gvf''_{2,1}
=-1,\\
\sum\Gg_k \Gvf'_{2,1-k}+\Gvf'_{2,1}
=0,\\
\sum\Gg_k \Gvf''_{2,1-k}+ \Gvf''_{2,1}
=1,\\
\sum\Gg_k \Gvf'_{2,n-k} +|n| \Gvf'_{2,n} =0 \quad \mbox{if } n\neq  \pm 1,\\
\sum\Gg_k \Gvf''_{2,n-k}  +|n| \Gvf''_{2,n} =0 \quad \mbox{if } n\neq  \pm 1.\\
\end{cases}
\label{phi2}
\end{equation}

The polarization tensor $T(\GD, \gamma)=(T_{ij})_{i,j=1}^2$ given by \eqnref{Tdefdisk} is now summarized as follows:
\begin{equation}
\begin{cases}
T_{11}&=\ds  \pi\sum_{k=-\infty}^{+\infty} \Gg_k(\Gvf_{1,-k-1}+\Gvf_{1,1-k})-2\pi \Gvf'_{1,1} ,\\
T_{12}&=\ds  \pi\sum_{k=-\infty}^{+\infty} \Gg_k(\Gvf_{2,-k-1}+\Gvf_{2,1-k})-2\pi \Gvf'_{2,1} ,\\
T_{21}&=\ds  -i\pi\sum_{k=-\infty}^{+\infty} \Gg_k(\Gvf_{1,-k-1}-\Gvf_{1,1-k})+2\pi \Gvf''_{1,1} ,\\
T_{22}&=\ds  -i\pi\sum_{k=-\infty}^{+\infty} \Gg_k(\Gvf_{2,-k-1}-\Gvf_{2,1-k})+2\pi \Gvf''_{2,1} ,
\end{cases}
\label{Tij}
\end{equation}
where $\Gg_k$ and $\Gvf_{j,k}$, $j=1,2$, $k=0,\pm 1,\pm 2,\dots$ satisfy \eqref{phi1} and \eqref{phi2}.

So far, we derive a general formula for the polarization tensor for the disk with the imperfect interface. We now prove Proposition \ref{main_result}. In view of \eqnref{Ggdisk}, we seek $\Gg$ in the following form:
\beq
\Gg=\Gg_{0}+2\Gg_2\cos 2\Gt.
\label{gamma2}
\eeq
Here we assume $\Gg_0>0$ and $\Gg_0 \geq 2|\Gg_2|$ to ensure the non-negativity of $\Gg$.
Then systems \eqref{phi1} and \eqref{phi2} can be simplified as
\begin{equation}
\begin{cases}
\ds (\Gg_0+{\Gg_2}+1)  \Gvf'_{1,1} +\Gg_2 \Gvf'_{1,3}
=-1,\\
(\Gg_0-{\Gg_2}+1)  \Gvf''_{1,1} +\Gg_2 \Gvf''_{1,3}
=0,\\
\Gg_2 \Gvf_{1,n+2}+({\Gg_0}+|n|) \Gvf_{1,n}+\Gg_2\Gvf_{1,n-2}
=0 \quad \mbox{if } n\neq  \pm 1,\\
\end{cases}
\label{phi21}
\end{equation}
and
\begin{equation}
\begin{cases}
(\Gg_0+{\Gg_2}+1)  \Gvf'_{2,1} +\Gg_2   \Gvf'_{2,3}
=0,\\
\ds (\Gg_0-{\Gg_2}+1) \Gvf''_{2,1} +\Gg_2  \Gvf''_{2,3}
=1,\\
\Gg_2 \Gvf_{2,n+2}+({\Gg_0}+|n|) \Gvf_{2,n}+\Gg_2\Gvf_{2,n-2}
=0 \quad \mbox{if } n\neq  \pm 1.\\
\end{cases}
\label{phi22}
\end{equation}
Then the first and third identities of system \eqref{phi21} yield the following system of $\Gg$ and $\Gvf'_1$:
\begin{equation}
\begin{bmatrix}
\Gg_0+\Gg_2+1 & \Gg_2  \\
\Gg_2  & \Gg_0+3  & \Gg_2 &&\textbf{0}\\
&\ddots  & \ddots & \ddots  \\
&   & \Gg_2 & \Gg_0+2n-1 &    \Gg_2  \\
& \textbf{0}  & & \ddots & \ddots & \ddots\\
\end{bmatrix}\begin{bmatrix}
\Gvf'_{1,1} \\
\Gvf'_{1,3}\\
\vdots\\
\Gvf'_{1,2n-1}\\
\vdots\\
\end{bmatrix}=\begin{bmatrix}
-1\\
0\\
\vdots\\
0\\
\vdots \\
\end{bmatrix},
\label{gaphi1}
\end{equation}
while the second and third identities of system \eqref{phi21} yield the system of $\Gg$ and $\Gvf''_1$:
\begin{equation}
\begin{bmatrix}
\Gg_0-\Gg_2+1 & \Gg_2  \\
\Gg_2  & \Gg_0+3  & \Gg_2 &&\textbf{0}\\
&\ddots  & \ddots & \ddots  \\
&   & \Gg_2 & \Gg_0+2n-1 &    \Gg_2  \\
& \textbf{0}  & & \ddots & \ddots & \ddots\\
\end{bmatrix}\begin{bmatrix}
\Gvf''_{1,1} \\
\Gvf''_{1,3}\\
\vdots\\
\Gvf''_{1,2n-1}\\
\vdots\\
\end{bmatrix}=\begin{bmatrix}
0\\
0\\
\vdots\\
0\\
\vdots \\
\end{bmatrix}.
\label{gaphi2}
\end{equation}

Let, for $N\geq 3$,
\begin{equation*}
\mathcal{A}_N=\begin{bmatrix}
\Gg_0+\Gg_2+1 & \Gg_2  \\
\Gg_2  & \Gg_0+3 & \Gg_2 & & \textbf{0} \\
& \ddots &\ddots  &\ddots & & \\
& &\Gg_2  & \Gg_0+2N-3 & \Gg_2\\
&  \textbf{0}&& \Gg_2 & \Gg_0+2N-1  \\
\end{bmatrix}, 
\end{equation*}
and
\begin{equation*}
\mathcal{B}_N=\begin{bmatrix}
\Gg_0-\Gg_2+1 & \Gg_2  \\
\Gg_2  & \Gg_0+3 & \Gg_2 & & 0 \\
& \ddots &\ddots  &\ddots & & \\
& 0&\Gg_2  & \Gg_0+2N-3 & \Gg_2\\
&  && \Gg_2 & \Gg_0+2N-1  \\
\end{bmatrix}.
\end{equation*}
Due to the recursive relation in \eqref{phi21}, we have
\beq\label{N}
\Gg_2\Gvf_{1,2N-3}+({\Gg_0}+2N-1) \Gvf_{1,2N-1}=-\Gg_2 \Gvf_{1,2N+1}.
\eeq
Thus \eqref{gaphi1} can be rewritten as
\begin{equation}\label{Aphi}
\mathcal{A}_N\begin{bmatrix}
\Gvf'_{1,1} \\
\Gvf'_{1,3}\\
\vdots\\
\Gvf'_{1,2N-3}\\
\Gvf'_{1,2N-1}
\end{bmatrix}=\begin{bmatrix}
-1\\
0\\
\vdots\\
0\\
-\Gg_2 \Gvf'_{1,2N+1}
\end{bmatrix}.
\end{equation}

Similarly, \eqref{gaphi2} can be written as
\beq\label{Bphi}
\mathcal{B}_N\begin{bmatrix}
	\Gvf''_{1,1} \\
	\Gvf''_{1,3}\\
	\vdots\\
	\Gvf''_{1,2N-3}\\
	\Gvf''_{1,2N-1}
\end{bmatrix}=\begin{bmatrix}
	0\\
	0\\
	\vdots\\
	0\\
	-\Gg_2 \Gvf''_{1,2N+1}
\end{bmatrix}.
\eeq

Let $d_1=\Gg_0+\Gg_2+1$, $d_2=\Gg_0+3$, $\ldots$, and $d_N= \Gg_0+2N-1$ so that $\mathcal{A}_N$ takes the form
\begin{equation*}
\mathcal{A}_N=\begin{bmatrix}
d_1 & \Gg_2  \\
\Gg_2  & d_2 & \Gg_2 & & \textbf{0} \\
& \ddots &\ddots  &\ddots & & \\
& &\Gg_2  & d_{N-1} & \Gg_2\\
&  \textbf{0}&& \Gg_2 & d_N  \\
\end{bmatrix}.
\end{equation*}
By Theorem 2.1 in \cite{inverse}, the tridiagonal matrix $\mathcal{A}_N$ is invertible and the inverse $\mathcal{A}_N^{-1}=(a_{kj})_{1\leq k,j\leq N}$ is given by:
\begin{equation*}
\begin{split}
&a_{11}=\left(d_1-\frac{\Gg_2^2\tau_3}{\tau_2}\right)^{-1},\\
&a_{NN}=\left(d_N-\frac{\Gg_2^2\xi_{N-2}}{\xi_{N-1}}\right)^{-1},\\
&a_{kk}=\left(d_i-\frac{\Gg_2^2\xi_{k-2}}{\xi_{k-1}}-\frac{\Gg_2^2\tau_{k+2}}{\tau_{k+1}}\right)^{-1}, \quad k=2,3,\dots,N-1,\\
&a_{kj}=\ds
\begin{cases}
(-\Gg_2)^{N-1}\d\frac{\xi_{k-1}}{\xi_{j-1}}a_{jj} \quad \mbox{if } k<j,\\
(-\Gg_2)^{N-1}\d\frac{\tau_{k+1}}{\tau_{j+1}}a_{jj} \quad \mbox{if } k>j,
\end{cases}
\end{split}
\end{equation*}
where
\begin{equation*}
\xi_k=\begin{cases}
1 \quad &\mbox{if } k=0,\\
d_1 \quad &\mbox{if } k=1,\\
d_k\xi_{k-1}-\Gg_2^2\xi_{k-2} \quad &\mbox{if }k=2,3,\dots,N,
\end{cases}
\end{equation*}
and
\begin{equation*}
\tau_k=\begin{cases}
1 \quad &\mbox{if } k=N+1,\\
d_N \quad &\mbox{if } k=N,\\
d_k\tau_{k+1}-\Gg_2^2\tau_{k+2} \quad &\mbox{if } k=N-1,N-2,\dots,1.
\end{cases}
\end{equation*}

Then it is easy to solve \eqref{Aphi} and obtain
\beq\label{phi1re}
\Gvf'_{1,2k-1}=-a_{k1}-\Gg_2 \Gvf'_{1,2N+1}a_{kN}, \quad k=1,\dots,N.
\eeq
Similarly, we can solve \eqref{Bphi} and obtain
$$
\Gvf''_{1,2k-1}=-\Gg_2 \Gvf''_{1,2N+1}b_{kN}, \quad k=1,\dots,N,
$$
where $(b_{ij})$ is the inverse matrix of $\mathcal{B}_N$.

Since $\Gvf_1\in H^{1/2}(\p\GD)$, we have $\lim_{N\rightarrow \infty} \Gvf_{1,2N+1}=0$. Then it is easy to see that as $N\rightarrow \infty$ we have
\beq
\Gvf'_{1,1} =-1/(\Gg_0+\Gg_2+1),
\label{Rephi11}
\eeq
and
\beq\label{Imphi1}
\Gvf''_{1,2n-1}=0,\quad n=0,\pm 1,\pm 2,\dots.
\eeq

Similarly, it is easy to see from system \eqref{phi22} that
\begin{equation*}
\mathcal{A}_N\begin{bmatrix}
\Gvf'_{2,1} \\
\Gvf'_{2,3}\\
\vdots\\
\Gvf'_{2,2N-3}\\
\Gvf'_{2,2N-1}
\end{bmatrix}=\begin{bmatrix}
0\\
0\\
\vdots\\
0\\
-\Gg_2 \Gvf'_{2,2N+1}
\end{bmatrix},
\quad
\mathcal{B}_N\begin{bmatrix}
\Gvf''_{2,1} \\
\Gvf''_{2,3}\\
\vdots\\
\Gvf''_{2,2N-3}\\
\Gvf''_{2,2N-1}
\end{bmatrix}=\begin{bmatrix}
1\\
0\\
\vdots\\
0\\
-\Gg_2 \Gvf''_{2,2N+1}
\end{bmatrix}.
\end{equation*}

In the same way as before, it is easy to solve the above systems and obtain
$$
\Gvf'_{2,2k-1}=-\Gg_2 \Gvf'_{1,2N+1}a_{kN}, \quad k=1,\dots,N,
$$
and
\beq\label{phi2im}
\Gvf''_{2,2k-1}=1-\Gg_2 \Gvf''_{2,2N+1}b_{kN}, \quad k=1,\dots,N.
\eeq
Hence
\beq\label{Rephi2}
\quad \Gvf'_{2,2n-1}=0,\quad n=0,\pm 1,\pm 2,\dots,
\eeq
and
\beq
\Gvf''_{2,1} =1/(\Gg_0-\Gg_2+1).
\label{Imphi21}
\eeq

To determine the coefficients $\Gvf_{j,2n}$, $n=0,\pm 1,\pm 2,\dots$, for $j=1,2$, from the recursive relations in \eqref{phi21} and \eqref{phi22}, we have
$$\Gg_2 \Gvf_{j,2} + \Gg_0 \Gvf_{j,0} + \Gg_2\Gvf_{j,-2}=0.$$
Since $\Gg_0>0$, it is easy to see that $\Gvf_{j,0}=0$.
Using the previous technique, it is easy to obtain
$$
\Gvf_{j,2n}=0, \quad n=0,\pm 1,\pm 2,\dots, \quad \mbox{for } j=1,2.
$$
Then the two density functions $\Gvf_j$, $j=1,2$, have the following forms
\begin{equation*}
\begin{cases}
\ds \Gvf_{1}=2\sum_{n=1}^{+\infty} \Gvf'_{1,2n-1} \cos(2n-1)\Gt,\\
\ds \Gvf_{2}=-2\sum_{n=1}^{+\infty} \Gvf''_{2,2n-1} \sin(2n-1)\Gt,\\
\end{cases}
\end{equation*}
where $ \Gvf'_{1,1} $ and $ \Gvf''_{2,1} $ are given by \eqref{Rephi11} and \eqref{Imphi21}, $\Gvf'_{1,2n-1}$ and $\Gvf''_{2,2n-1}$ are determined by \eqref{phi1re} and \eqref{phi2im}.

Recall that we try to seek $\Gg=\Gg_0+2\Gg_2\cos2\Gt$ with unknowns $\Gg_0$ and $\Gg_2$ so that $(\GD,\Gg)$ has the polarization tensor in the form of \eqref{Tgamma}. By comparing \eqref{Tgamma} and \eqref{Tij}, there should satisfy
\begin{equation}
\begin{cases}
T_{11}&=\ds 2\pi \left((\Gg_0+\Gg_2-1) \Gvf'_{1,1} +\Gg_2 \Gvf'_{1,3}\right)=2\pi|b_\GO|,\\
T_{12}&=\ds 2\pi \left((\Gg_0+\Gg_2-1) \Gvf'_{2,1} +\Gg_2 \Gvf'_{2,3}\right)=0,\\
T_{21}&=\ds 2\pi\left((-\Gg_0+\Gg_2+1) \Gvf''_{1,1} -\Gg_2 \Gvf''_{1,3}\right)=0,\\
T_{22}&=\ds 2\pi \left((-\Gg_0+\Gg_2+1) \Gvf''_{2,1} -\Gg_2 \Gvf''_{2,3}\right)=-2\pi|b_\GO|.
\end{cases}
\label{T1221}
\end{equation}
It is easy to see $T_{12}=0$, $T_{21}=0$ from \eqref{Imphi1} and \eqref{Rephi2}.

From the first identity in \eqref{phi21} we have
\beq\label{phi13}
\Gg_2 \Gvf'_{1,3}=-1-(\Gg_0+\Gg_2+1) \Gvf'_{1,1}.
\eeq
Plugging \eqref{phi13} into the first identity of \eqref{T1221} and by \eqref{Rephi11} we have
\beq\label{gamma12}
\frac{2}{\Gg_0+\Gg_2+1}-1=|b_\GO|.
\eeq

Similarly, from the second identity in \eqref{phi22} we have
\beq\label{phi23}
\Gg_2 \Gvf''_{2,3}=1-(\Gg_0-\Gg_2+1) \Gvf''_{2,1}.
\eeq
Plugging \eqref{phi23} into the last identity of \eqref{T1221} and by \eqref{Imphi21} we have
\beq\label{gamma21}
\frac{2}{\Gg_0-\Gg_2+1}-1=-|b_\GO|.
\eeq

The coefficients $\Gg_0$ and $\Gg_2$ can be easily determined by solving \eqref{gamma12} and \eqref{gamma21}. And we obtain that
\begin{equation*}
\begin{cases}
\ds \Gg_0=\frac{1}{1+|b_\GO|}+\frac{1}{1-|b_\GO|}-1,\\
\ds \Gg_2=\frac{1}{1+|b_\GO|}-\frac{1}{1-|b_\GO|}.
\end{cases}
\end{equation*}
This proves Proposition \ref{main_result}.

\section{Numerical examples}
In this section we give some numerical examples. Let $\GO$ be an ellipse of major axis $a$ and minor axis $b$. A conformal mapping from the exterior of unit disk $\GD$ to the exterior of $\GO$ is
\begin{equation*}
z=\Phi(\Gz)=\frac{a+b}{2}\left(\Gz+\frac{a-b}{a+b}\frac{1}{\Gz}\right), \quad \Gz\in \Cbb\setminus \GD, \quad z\in \Cbb\setminus \Omega.
\end{equation*}
Scaling domain $\GO$ by $\frac{2}{a+b}$ (still denote the scaled domain by $\GO$), we have
\beq
\Phi_\GO(\Gz)=\Gz+\frac{a-b}{a+b}\frac{1}{\Gz}.
\label{conf1}
\eeq
Then $b_\GO=\frac{a-b}{a+b}$. Then the interface parameter $\Gg$ on $\p \GD$ in \eqnref{Ggdisk} is given by
\begin{equation*}
\begin{split}
\Gg(e^{i\Gt})&=\frac{1}{1+|b_\GO|}+\frac{1}{1-|b_\GO|}-1+\left(\frac{2}{1+|b_\GO|}-\frac{2}{1-|b_\GO|}\right)\cos2\Gt\\
&=\left(\frac{a+b}{2a}+\frac{a+b}{2b}-1\right)+\left(\frac{a+b}{a}-\frac{a+b}{b}\right)\cos2\Gt.
\end{split}
\end{equation*}
Transform the interface parameter back to $z$-plane through conformal mapping \eqref{conf1}, we have from \eqref{Gbcon} that
\begin{equation*}
\Gb(\Phi_\GO(e^{i\Gt}))=\left(\left(\frac{a+b}{2a}+\frac{a+b}{2b}-1\right)+\left(\frac{a+b}{a}-\frac{a+b}{b}\right)\cos2\Gt\right)\left|1-\frac{a-b}{a+b}\frac{1}{e^{2i\Gt}}\right|^{-1}.
\end{equation*}

For example, let $\GO$ be an ellipse of major axis $a=5/4$ and minor axis $b=3/4$. Then $|b_\GO|=1/4$. The interface parameter
\beq
\Gb(\Phi_\GO(e^{i\Gt}))=\left(\frac{17}{15}-\frac{16}{15}\cos2\Gt\right)\frac{1}{\left|1-1/(4e^{2i\Gt})\right|}.
\label{betaellipse}
\eeq

Using finite element method, we solve \eqref{main} . We also solve the perfect bonding problem which is 
\beq\label{perfect}
\begin{cases}
	\Delta u=0 \quad &\mbox{in }\Rbb^2\setminus\ol{\GO},\\
	u=\lambda \quad&\mbox{on }\p \GO,\\
	\vspace{0.3em}
	u(x)-a \cdot x=O(|x|^{-1}) \quad&\mbox{as }|x|\rightarrow\infty,
\end{cases}
\eeq
where the constant $\lambda$ is determined by the condition $\int_{\p\GO}\frac{\p u}{\p \nu}|_+ \, ds=0$. Figure \ref{ellipse} show computational results. The left column is when the uniform field is parallel to $x_1$-axis and the right column is $x_2$-axis. Top row is for perfectly bonding interfaces and the bottom row is for imperfect interfaces with the parameter $\Gb$ defined by \eqnref{betaellipse}. Figures show clearly that the field perturbation is much weaker for imperfect interfaces.

\begin{figure}[h]
	\centering
	\begin{subfigure}{0.4\textwidth}
		\includegraphics[scale=0.33]{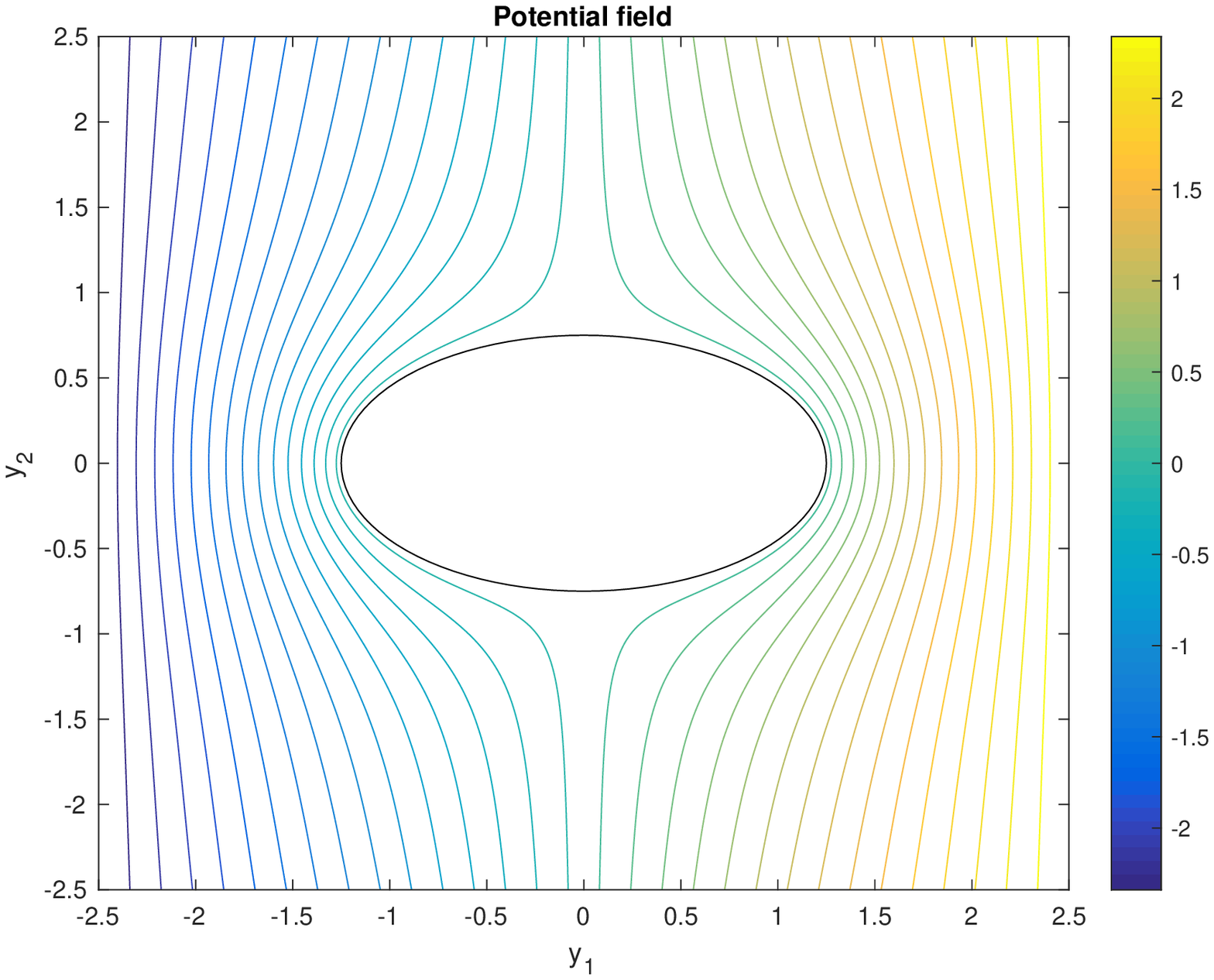}
		\subcaption{}
	\end{subfigure}
	\begin{subfigure}{0.4\textwidth}
		\includegraphics[scale=0.33]{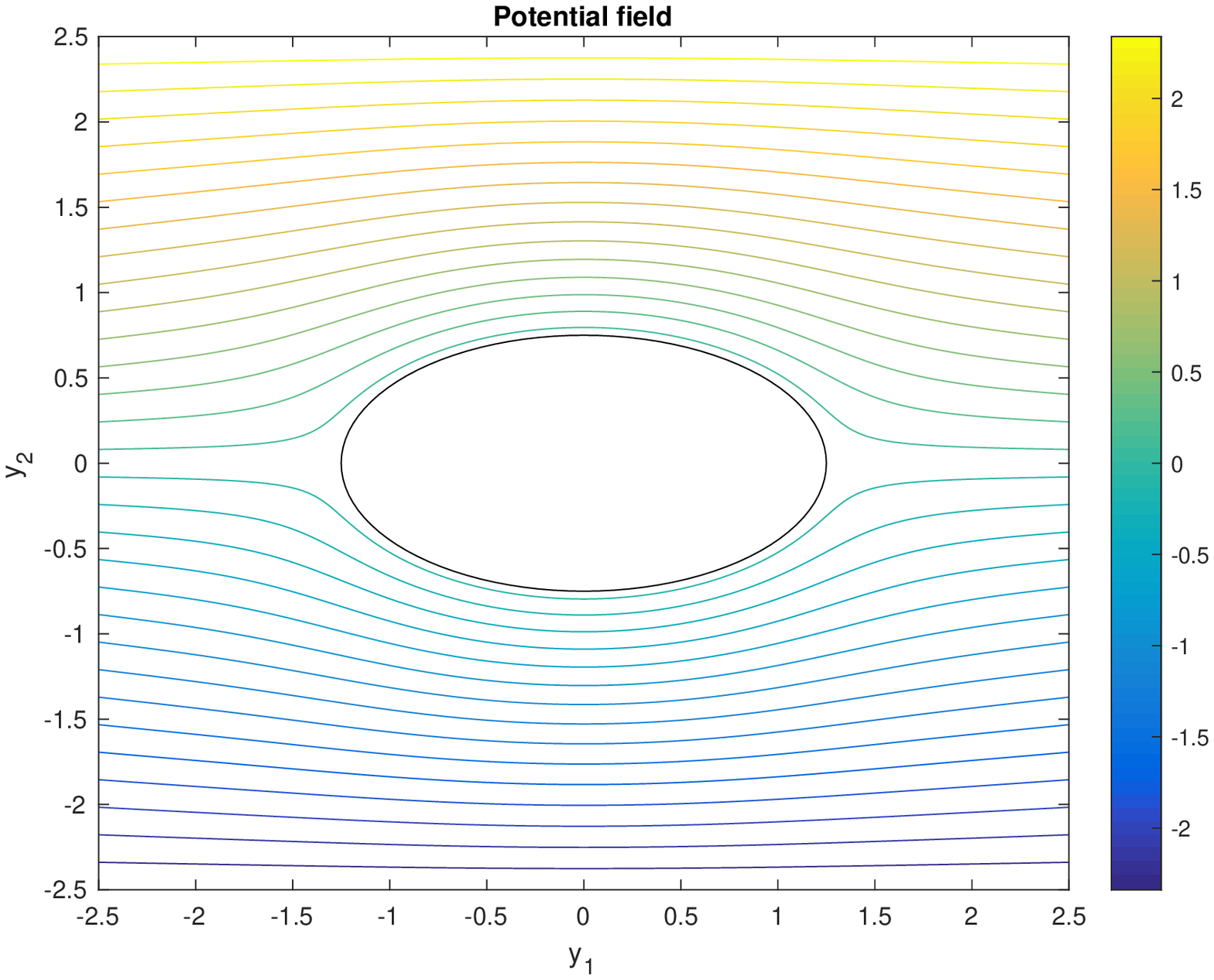}
		\caption{}
	\end{subfigure}\\
	\begin{subfigure}{0.4\textwidth}
		\includegraphics[scale=0.33]{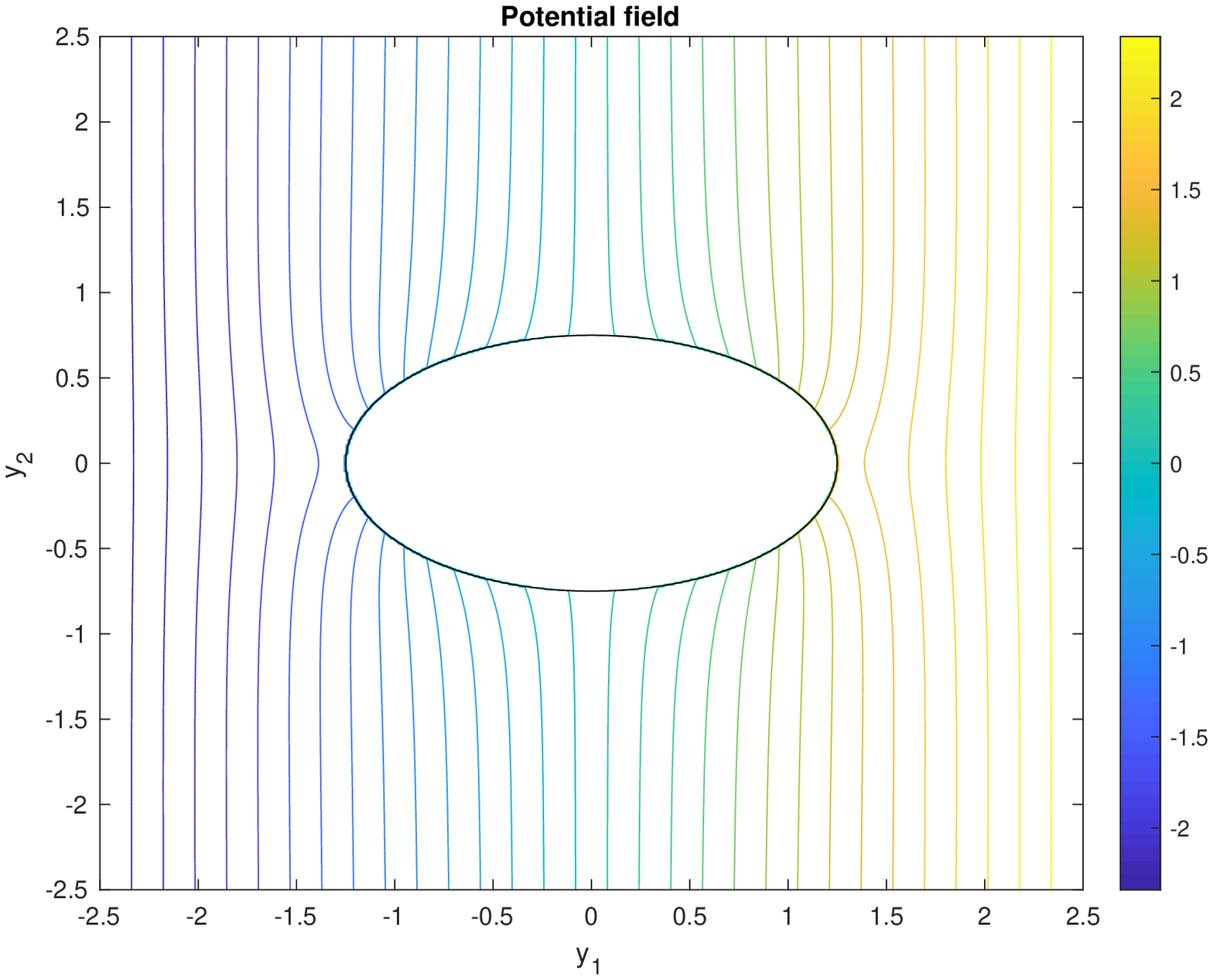}
		\caption{}
	\end{subfigure}
	\begin{subfigure}{0.4\textwidth}
		\includegraphics[scale=0.33]{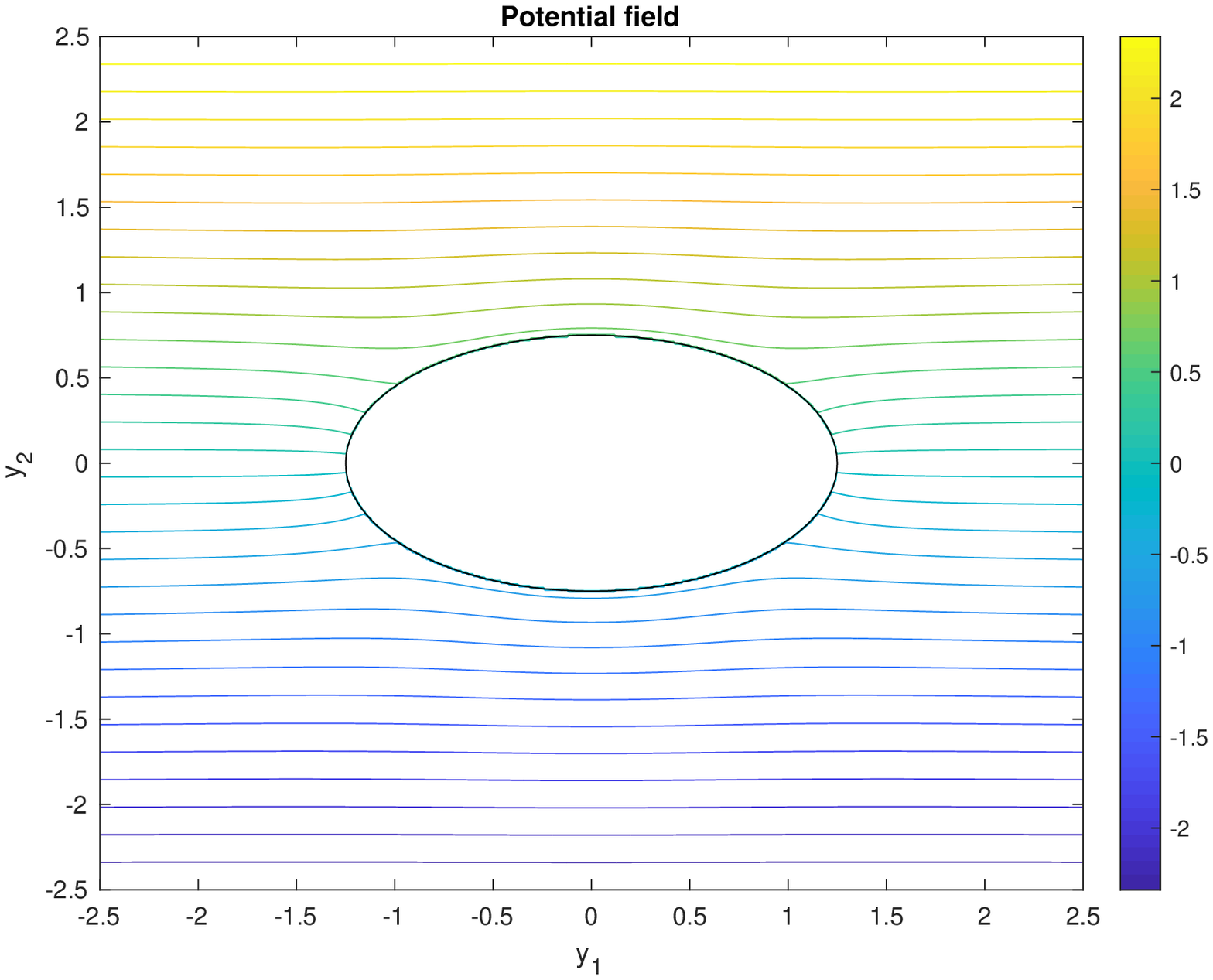}
		\caption{}
	\end{subfigure}
	\caption{(a) Perfectly bonding interface, $a=(1,0)$; (b) Perfectly bonding interface, $a=(0,1)$; (c) Imperfect interface, $a=(1,0)$; (d) Imperfect interface, $a=(0,1)$. The figures show that the field perturbation is much weaker for the imperfect interface.}
	\label{ellipse}
\end{figure}

We consider another example: a droplet-shaped domain. In this case, $\Phi_\GO$ is given by
\beq
\Phi_\GO(\Gz)=\Gz+\frac{1}{4\Gz}-\frac{1}{8\Gz^2}+... \quad \mbox{as } |\Gz|\rightarrow\infty,
\eeq
then $|b_\GO|=1/4$.
Thus the interface parameter is given by
\beq
\Gb(\Phi_\GO(e^{i\Gt}))=\left(\frac{17}{15}-\frac{16}{15}\cos 2\Gt\right)\left|\frac{1+e^{-i\Gt}}{(1+1/(2e^{i\Gt}))^2}\right|^{-1}.
\label{betadroplet}
\eeq

Results are shown in Figure \ref{droplet}. Figures again show clearly that the field perturbation is much weaker for imperfect interfaces.
\begin{figure}[h]
	\centering
	\begin{subfigure}{0.4\textwidth}
		\includegraphics[scale=0.33]{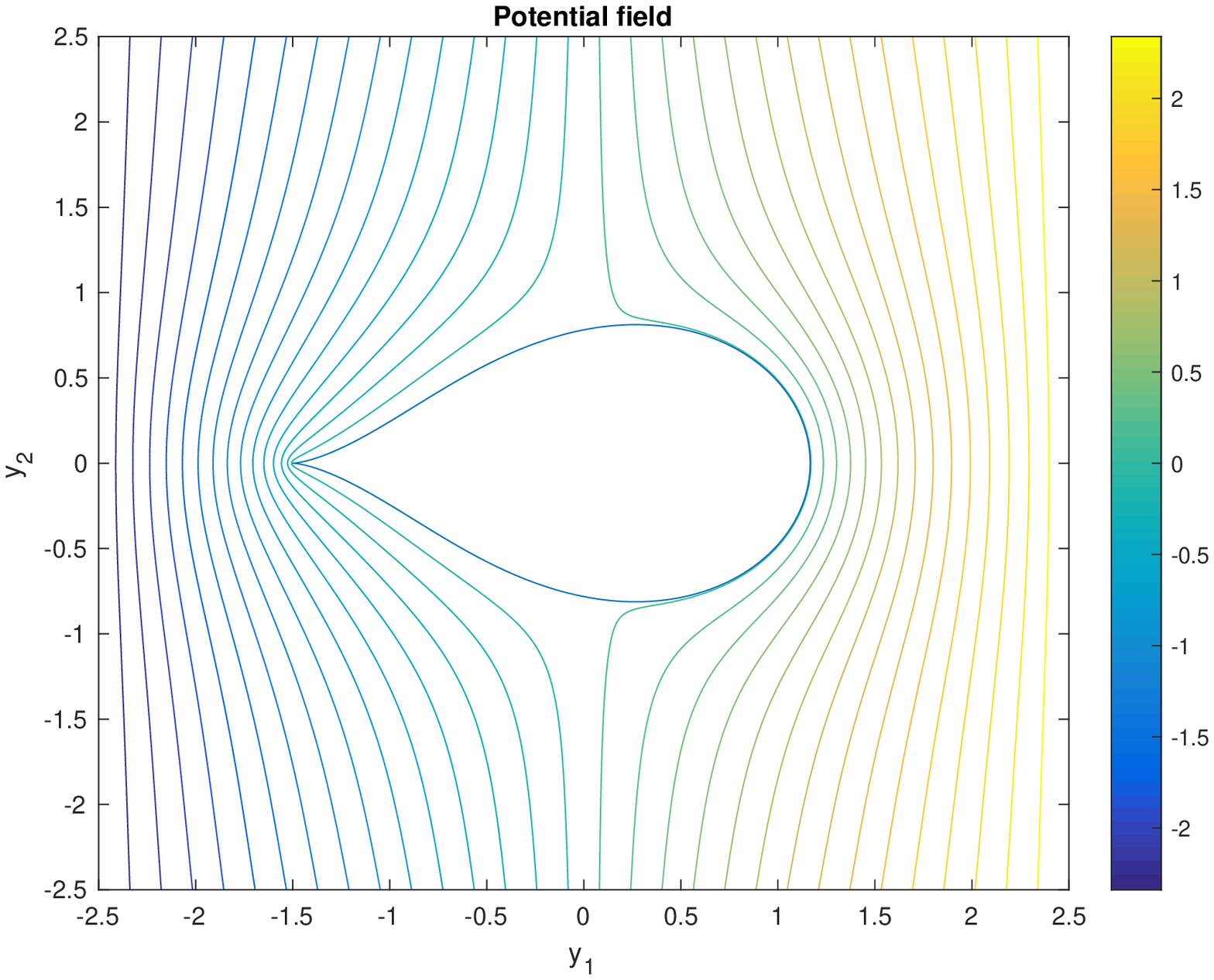}
		\subcaption{}
	\end{subfigure}
	\begin{subfigure}{0.4\textwidth}
		\includegraphics[scale=0.33]{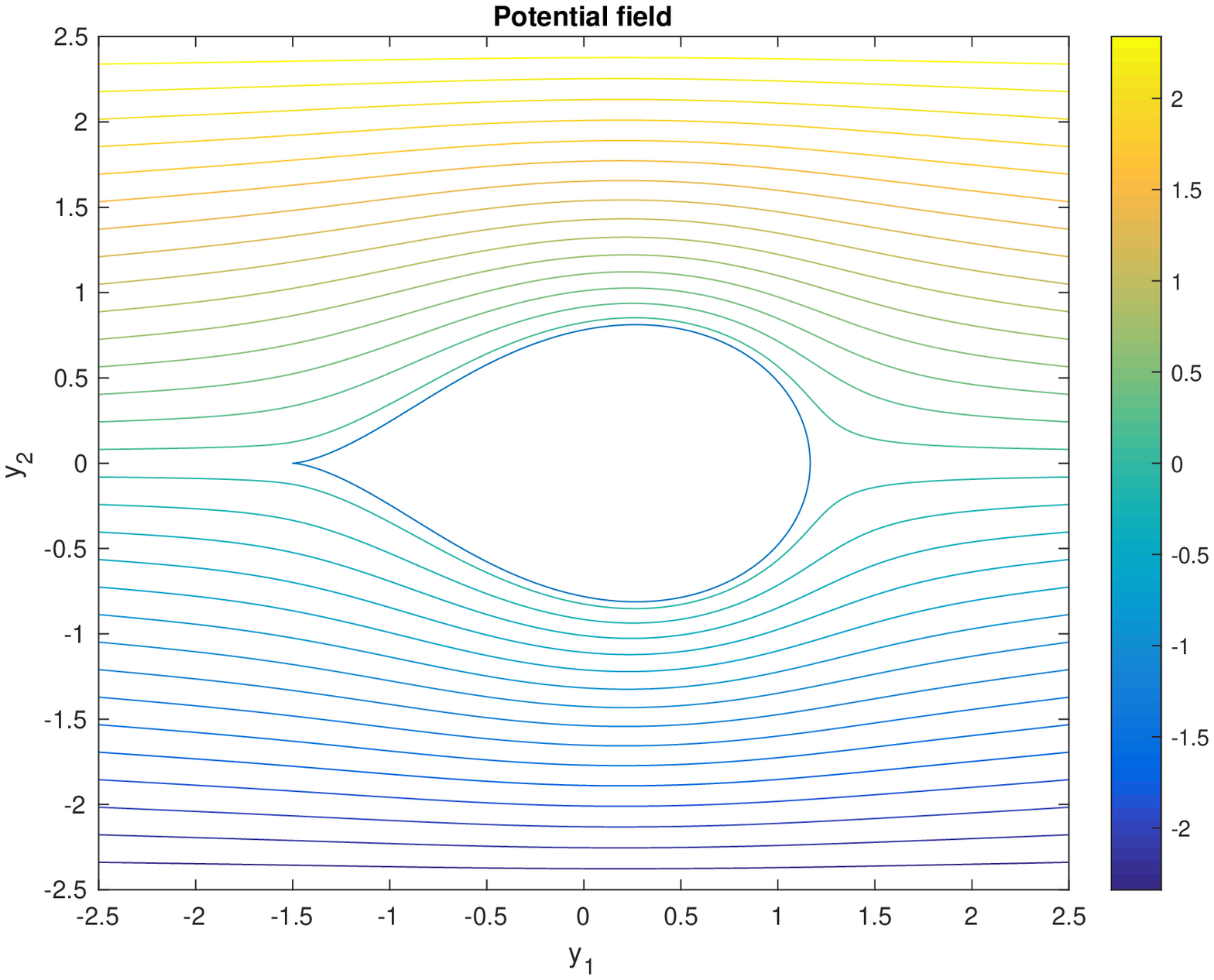}
		\caption{}
	\end{subfigure}\\
	\begin{subfigure}{0.4\textwidth}
		\includegraphics[scale=0.33]{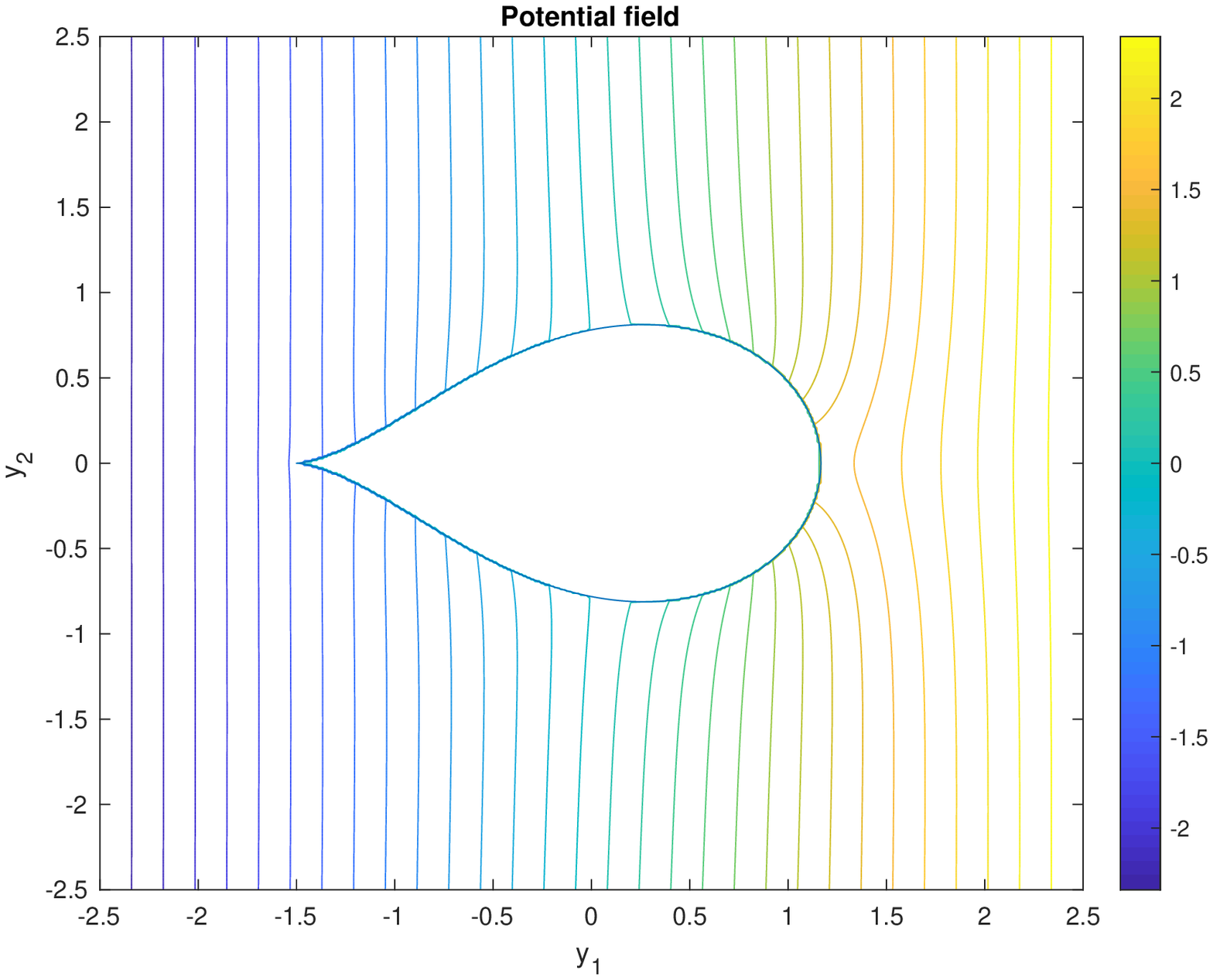}
		\caption{}
	\end{subfigure}
	\begin{subfigure}{0.4\textwidth}
		\includegraphics[scale=0.33]{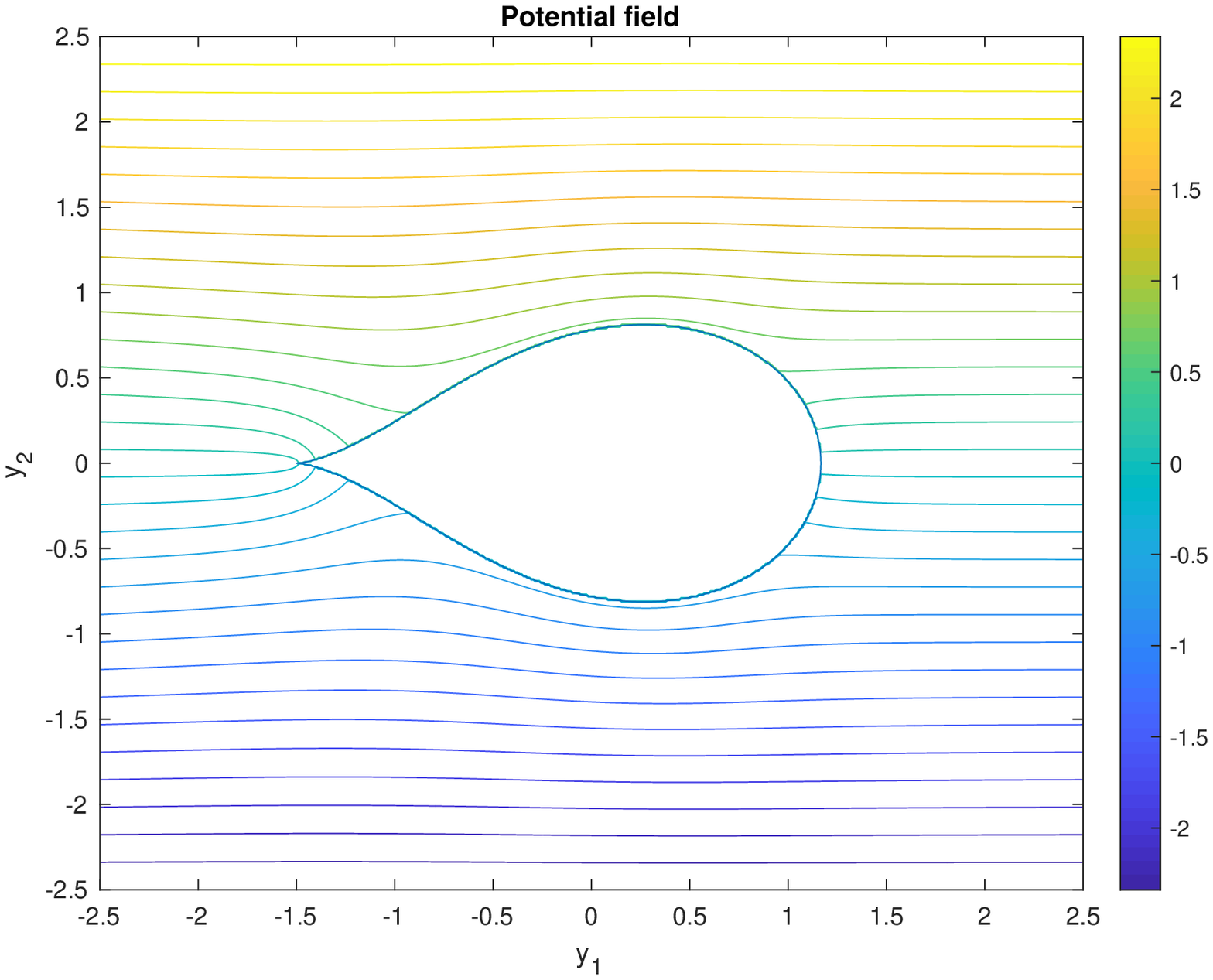}
		\caption{}
	\end{subfigure}
	\caption{(a) Perfectly bonding interface, $a=(1,0)$; (b) Perfectly bonding interface, $a=(0,1)$; (c) Imperfect interface, $a=(1,0)$; (d) Imperfect interface, $a=(0,1)$. The figures show that the field perturbation is much weaker for the imperfect interface.}
	\label{droplet}
\end{figure}

\section{Conclusion}

We introduce a new concept of weakly neutral inclusions, or polarization tensor vanishing structures, which perturb, upon insertion, the uniform fields only weakly. We show that inclusions of arbitrary shape can be realized as weakly neutral inclusions by introducing proper imperfect interface parameter, provided that the domain satisfies a certain condition (the condition \eqnref{assumebGO}). To construct the interface parameter $\Gb$, we pull back the problem to the unit disk using a conformal mapping and then use only $1$ and $\cos 2\Gt$. It is likely that this restriction \eqnref{assumebGO} is removed if we use more terms like $\cos 4\Gt$, etc in construction of $\Gb$. To the best of our knowledge, this is the first result on inclusions of general shape weakly neutral to multiple uniform fields. We also proved that the only shape with the imperfect interface which is neutral to all uniform fields is an ellipse in two dimensions and an ellipsoid in three dimensions.
 
This paper studies the neutral inclusions with imperfect interfaces. Recently authors with S. Sakaguchi also prove that there are inclusions of general shape with coated structure, namely, core-shell structure, which are weakly neutral to multiple uniform fields. This result is reported in \cite{KLS}.

\section{Appendix: A geometric characterization of ellipsoids}

We consider the problem \eqnref{LC} when $\sigma_m$ is anisotropic and $\Gs_m - \Gs_c I$ is either positive-definite or negative-definite. We prove the following theorem.

\begin{thm}\label{thm:ellipsoid}
Let $\GO$ be a simply connected domain in $\Rbb^d$ with $C^1$ boundary. Suppose that $\Gs_m$ appearing in \eqnref{LC} is anisotropic. Then $\GO$ with a certain interface parameter $\Gb$ is neutral to all uniform fields if and only if $\GO$ is an ellipse if $d=2$ and ellipsoid if $d=3$. If $\Gs_m$ is isotropic, then it must be a disk or a ball.
\end{thm}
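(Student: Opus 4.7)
The plan is to use the neutrality hypothesis to determine $u$ completely, both outside and inside $\GO$, and then read off a pointwise algebraic identity along $\p\GO$ which forces the outward normal to coincide with the gradient of a definite quadratic form. Throughout I set $A := I - \sigma_c^{-1}\sigma_m$ and $B := \sigma_m$, both symmetric; by hypothesis $A$ is definite (and $B$ is positive-definite, commuting with $A$).

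Outside $\GO$, neutrality gives $u|_+ = a\cdot x$ and $\sigma_m \p_\nu u|_+ = \langle Ba,\nu\rangle$ on $\p\GO$. Inside, the flux-continuity condition in \eqnref{LC} fixes the Neumann trace of $u$ to be $\sigma_c^{-1}\langle Ba,\nu\rangle$. Since the linear function $\sigma_c^{-1}(Ba)\cdot x$ is harmonic and has exactly this Neumann datum, uniqueness for the interior Neumann problem forces
\[
u(x) = \sigma_c^{-1}(Ba)\cdot x + C_a, \qquad x\in\GO,
\]
and linearity of the problem in $a$ makes $a\mapsto C_a$ linear, i.e.\ $C_a = \langle c,a\rangle$ for some vector $c\in\mathbb{R}^d$ independent of $a$. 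Substituting $u|_-$ and $u|_+$ into the LC interface condition $\beta(u|_+ - u|_-) = \sigma_m\p_\nu u|_+$ and using symmetry of $A$ yields
\[
\beta(x)\,\langle a,\, Ax-c\rangle = \langle a,\, B\nu(x)\rangle \qquad\text{on }\p\GO,
\]
which must hold for every $a\in\mathbb{R}^d$, so
\[
\beta(x)(Ax-c) = B\nu(x) \qquad\text{on }\p\GO.
\]
In particular $\nu(x)$ is parallel to $B^{-1}(Ax-c)$ and $\beta(x)=1/|B^{-1}(Ax-c)|$.

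Because $A$ and $B^{-1}$ commute (both are polynomials in $\sigma_m$), $B^{-1}A$ is symmetric, and since $A$ is definite and $B$ positive-definite, $B^{-1}A$ is itself positive- or negative-definite. The vector field $x\mapsto B^{-1}(Ax-c)$ is then the gradient of the quadratic form $F(x)=\tfrac12\langle B^{-1}A(x-x_0),x-x_0\rangle$ with $x_0:=A^{-1}c$. Thus $\p\GO$ is a level set of $F$, hence an ellipse for $d=2$ and an ellipsoid for $d=3$, centered at $x_0$; in the isotropic case $\sigma_m=\sigma_m I$ the matrix $B^{-1}A$ is a scalar multiple of $I$, so the level sets reduce to circles or spheres. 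This gives the ``only if'' direction. For the converse, given an ellipsoid $\GO=\{F\le r^2/2\}$ with $F$ as above, defining $u$ outside by $a\cdot x$, inside by $\sigma_c^{-1}(Ba)\cdot x + \langle Ax_0,a\rangle$, and $\beta(x):=1/|B^{-1}A(x-x_0)|$, one checks directly that both transmission conditions in \eqnref{LC} hold and the decay condition at infinity is trivially satisfied, so $\GO$ is neutral to every uniform field.

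The main obstacle is the first step: the identification of $u|_\GO$ with the affine function $\sigma_c^{-1}(Ba)\cdot x + C_a$. Once this is available, the remainder is essentially linear algebra, and the only delicate points are the linearity of $a\mapsto C_a$ (used to extract the single vector $c$) and the verification that $B^{-1}A$ is symmetric and definite, which respectively rely on the linearity of the problem and on the commutativity of $A$ and $B$ together with the definiteness hypothesis on $\sigma_m-\sigma_c I$.
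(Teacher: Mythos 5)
Your proof is correct and takes a noticeably more self-contained route than the paper's. The paper cites the single-field neutrality criterion of Benveniste--Miloh to obtain the formula $\Gb = a_j\nu_j/x_j$ directly, diagonalizes $\Gs_m$ from the outset, and then reduces the theorem to the separate geometric lemma asserting that $a_1\nu_1/x_1 = a_2\nu_2/x_2 = a_3\nu_3/x_3$ on $\p\GO$ characterizes ellipsoids, which it proves by showing via a local chart that the quadratic form $a_2a_3\Phi_1^2+a_1a_3\Phi_2^2+a_1a_2\Phi_3^2$ is constant on $\p\GO$. You instead derive the interior trace of $u$ from scratch: neutrality pins down the exterior Cauchy data, flux continuity transfers the co-normal derivative inward, and uniqueness for the interior Neumann problem (plus linearity in $a$) forces $u|_\GO$ to be the affine function $\Gs_c^{-1}(Ba)\cdot x + \la c,a\ra$. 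Feeding this into the LC jump condition yields the single vector identity $\Gb(x)(Ax-c)=B\nu(x)$, which simultaneously determines $\Gb$ and shows $\nu$ is parallel to the gradient of the definite quadratic form $F(x)=\tfrac12\la B^{-1}A(x-x_0),x-x_0\ra$. This identity, specialized to diagonal $\Gs_m$ and $c=0$, is exactly the paper's component-wise condition, so the two arguments converge; what you gain is independence from the external citation and a coordinate-free formulation. Two small points worth making explicit: (i) the step ``$\nu\parallel\nabla F$ on $\p\GO$ implies $\p\GO$ is a level set of $F$'' is the content of the paper's local-chart computation (tangential derivatives of $F$ vanish, hence $F$ is locally constant, hence constant by connectedness) and deserves a sentence; (ii) the sign of $\Gb$ is tied to which side of $\p\GO$ the quadratic form increases, so only one of the two definiteness cases for $\Gs_m-\Gs_c I$ produces a nonnegative $\Gb$ on a bounded domain --- a subtlety neither proof dwells on.
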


\begin{proof}
We only prove the case when $d=3$ since the proof for the two dimensional case is similar and simpler. Suppose that
$$
\Gs_m= \mbox{diag} [\Gs_m^1, \Gs_m^2, \Gs_m^3 ].
$$
Let $\Be_1=(1,0,0)$, $\Be_2=(0,1,0)$ and $\Be_3=(0,0,1)$.
	
According to the neutrality criteria in \cite{B.M}, if $\GO$ is neutral to one field $\Be_j$ for some $j$, then after rotating and translating $\GO$ if necessary, the interface parameter $\Gb$ should be chosen as
\beq
\Gb(x)=a_j  \frac{\nu_j(x)}{x_j},\quad x=(x_1,x_2,x_3)\in\p\GO,
\eeq
where $\nu_j$ is the $j$th component of the outward unit normal vector $\nu$ and
$$
a_j: =\frac{\Gs_c\Gs_m^j}{\Gs_c-\Gs_m^j}.
$$
So, $\GO$ with the interface parameter $\Gb$ is neutral to $\Be_j$ for all $j$ if and only if
\beq\label{condition}
a_1\frac{\nu_1}{x_1}=a_2\frac{\nu_2}{x_2}=a_3\frac{\nu_3}{x_3} \quad \mbox{on } \p\GO,
\eeq
where we may assume that $a_j >0$ for $j=1,2,3$. Thus Theorem \ref{thm:ellipsoid} is a consequence of the following theorem which provides a geometric characterization of ellipsoids. \qed

\begin{thm}[Characterization of ellipsoids]
Let $\GO$ be a simply connected domain in $\Rbb^3$ with $C^1$ boundary. Then, $\GO$ is an ellipsoid if and only if \eqnref{condition} holds. A similar characterization holds for ellipses.
\end{thm}	

\pf
If $\GO$ is an ellipsoid of the form $\sum_{j=1}^3 x_j^2/c_j^2$, then the outward normal vector on $\p\GO$ is given by
$$
\nu(x)= \left(\frac{x_1}{c_1^2 w(x)}, \frac{x_2}{c_2^2 w(x)},\frac{x_3}{c_3^2 w(x)}\right),
\quad
w(x) = \sqrt{\frac{x_1^2}{c_1^4}+\frac{x_2^2}{c_2^4}+\frac{x_3^2}{c_3^4}}.
$$
Thus \eqnref{condition} holds.

To prove the converse, let $U$ be an open set in $\Rbb^2$ and let
$\Phi:U\rightarrow \p\GO$ be a local chart. Then \eqref{condition} implies that $(a_2a_3\Phi_1, a_1a_3\Phi_2, a_1a_2\Phi_3)$ is proportional to the normal vector $\nu$ at every point of $\p \GO$. Therefore, since $\Phi_u$ is tangential to $\p\GO$, we have
$$
\frac{\p}{\p u}(a_2a_3\Phi^2_1+a_1a_3\Phi^2_2+a_1a_2\Phi^2_3)=2(\Phi_{1u}, \Phi_{2u}, \Phi_{3u}) \cdot (a_2a_3\Phi_1, a_1a_3\Phi_2, a_1a_2\Phi_3)=0.
$$
Likewise, we also have
$$
\frac{\p}{\p v}(a_2a_3\Phi^2_1+a_1a_3\Phi^2_2+a_1a_2\Phi^2_3)= 0.
$$
Thus we have
$$
a_2a_3\Phi^2_1+a_1a_3\Phi^2_2+a_1a_2\Phi^2_3=\mbox{constant}.
$$
It means that every local chart of $\p\GO$ is a part of ellipsoid
\beq\label{ell}
a_2a_3x_1^2+a_1a_3x_2^2+a_1a_2x_3^2=\mbox{constant}.
\eeq
One can easily show that the constants are the same by continuity argument. Therefore, $\p\GO$ is an ellipsoid.
\end{proof}

\end{document}